\newtheorem{theorem}{Theorem}
\newtheorem{lemma}[theorem]{Lemma}
\newtheorem{corollary}[theorem]{Corollary}
\theoremstyle{definition}
\newtheorem{definition}[theorem]{Definition}
\newcommand{\U}{\Upsilon}
\newcommand{\inv}{^{-1}}
\newcommand{\abs}[1]{\vert #1\vert}
\newcommand{\AF}{\mathcal A}
\newcommand{\BF}{\mathcal B}
\newcommand{\CF}{\mathcal C}
\newcommand{\C}{\mathcal C}
\newcommand{\G}{\Gamma}
\title[A Geometric Zero-One Law]
{A Geometric Zero-One Law}
\author{Robert H. Gilman}
\address{Department of Mathematical Sciences\\Stevens Institute of Technology\\Hoboken, NJ 07030}
\email{rgilman@stevens.edu}
\author{Yuri Gurevich}
\address{Microsoft Research\\One Microsoft Way\\Redmond, WA 98052}
\email{gurevich@microsoft.com}
\author{Alexei Miasnikov}
\address{Department of Mathematics and Statistics\\McGill University\\Montreal, Quebec H3A 2K6}
\email{alexeim@math.mcgill.ca}
\subjclass[2000]{03C13}
\keywords{finite structure, zero-one law, percolation}
\date{June 1, 2007}
\begin{document}

\begin{abstract} 
Each relational structure $X$ has an associated Gaifman graph, which endows
$X$ with the properties of a graph.  If $x$ is an element of $X$, let $B_n(x)$ be
the ball of radius $n$ around $x$.  Suppose that $X$ is infinite,
connected and of bounded degree.  A first-order sentence $\phi$ in the
language of $X$ is almost surely true (resp.\ a.s.\ false) for finite
substructures of $X$ if for every $x\in X$, the fraction of substructures
of $B_n(x)$ satisfying $\phi$ approaches $1$ (resp.\ $0$) as $n$ approaches
infinity.  Suppose further that, for every finite substructure, $X$ has a
disjoint isomorphic substructure.  Then every $\phi$ is a.s.\ true or
a.s.\ false for finite substructures of $X$.  This is one form of the
geometric zero-one law.  We formulate it also in a form that does not
mention the ambient infinite structure.  In addition, we investigate
various questions related to the geometric zero-one law. 
\end{abstract}

\maketitle

\section{Introduction}

Fix a finite purely relational vocabulary $\U$. From now on structures are $\U$ structures and sentences are first-order $\U$ sentences by default. By substructure we mean the induced substructure corresponding to a subset of elements. All relationships between the elements are inherited, and other relationships are ignored.

According to the well known zero-one law for first-order predicate logic, a first-order sentence $\phi$ is either almost surely true or almost surely false on finite structures~\cite{F}, \cite{GKLT}. In other words if a structure is chosen at random with respect to the uniform distribution on all structures with universe $\{1, 2, \ldots, n\}$, then the probability that $\phi$ is true approaches either $1$ or $0$ as $n$ goes to infinity.

There is another version of the zero-one law in which instead of choosing a structure uniformly at random from the set of structures with universe $\{1, 2,\ldots, n\}$ one chooses an isomorphism class of structures uniformly at random from the set of isomorphism classes of structures with universe of size $n$. This second version is known as the unlabeled zero-one law. The first version, which has received the greater share of attention, is called the labeled zero-one law. It holds for models of parametric axioms, graphs for example, i.e., undirected graphs without loops. For an introduction and surveys see~\cite{C}, \cite[Chapter 3]{EF}, \cite{G}, and~\cite{W}.

There are many extensions of the zero-one law to different logics and
different probability distributions. In this article we consider another
kind of extension. We show in Theorem~\ref{th:main} that under certain
circumstances there is a zero-one law for the finite substructures of a
fixed infinite structure; Theorem~\ref{th:main-version2} gives a variation on this theme which does not refer to the ambient infinite
structure. Theorem~\ref{th:more} shows that our results can yield zero-one laws for classes of structures to which neither the labeled nor unlabeled law applies. 

Let $X$ be a fixed infinite structure. If $X$ were finite, a natural way
to compute the probability that a finite substructure satisfied a
sentence $\phi$ would be to divide the number of substructures of $X$
satisfying $\phi$ by the total number of substructures of $X$. As $X$ is
infinite, this simple approach does not work; but there is a
straightforward extension which does. To explain it we need a few definitions.

Recall that the Gaifman graph~\cite{Ga} of $X$ has the elements of $X$ as its vertices and an undirected edge between any two distinct vertices, $x, y$, for which there is a relation $R\in \U$ and elements $z_1,\ldots z_\ell$ in $X$ such that $R(z_1,\ldots,z_\ell)$ is true in $X$ and $x,y \in \{z_1, \ldots z_\ell\}$. Denote the Gaifman graph of $X$ by $[X]$. 

If $X$ is a graph, we may identify $X$ with $[X]$. In any case we extend some standard graph-theoretic terminology from $[X]$ to $X$. The distance, $d(x,y)$, between $x,y\in X$ is the length of the shortest path from $x$ to $y$ in $[X]$ or $\infty$ if there is no such path. For any $Y\subseteq X$, $d(x, Y)$ is the minimum distance from $x$ to a point in $Y$, and $B_n(Y)$ is the substructure of $X$ supported by the elements a distance at most $n$ from $Y$. $B_n(x)$ is an abbreviation of $B_n(\{x\})$. The ambient structure $X$ to which $B_n(Y)$ and $B_n(x)$ refer will be clear from the context.  

Two substructures of $X$ are said to be disjoint if their intersection is empty and there are no edges between them in $[X]$. The disjoint union of structures is defined in the obvious way. Substructures corresponding to the connected components of $[X]$ are called components of $X$, and substructures which are unions of components are called closed. A structure with just one component is said to be connected. If all vertices of $[X]$ have finite degree, $X$ is locally finite; and if the vertex degrees are uniformly bounded, $X$ has bounded degree.

\begin{definition}\label{de:almosttrue} 
Suppose $X$ is an infinite, connected, locally finite structure. A sentence
is almost surely true for finite substructures of $X$ if for every $x\in
X$ the fraction of substructures of $B_n(x)$ for which the sentence is
true approaches $1$ as $n$ approaches infinity.
\end{definition}

The balls $B_n(x)$ mentioned in Definition~\ref{de:almosttrue} are finite because $X$ is locally finite.

\begin{definition}\label{de:duplicate substructure} 
A structure $X$ has the duplicate substructure property if for every finite substructure there is a disjoint isomorphic substructure.
\end{definition}

\begin{theorem}\label{th:main} Let $X$ be an infinite connected structure of bounded degree and possessing the duplicate substructure property. Then any sentence is either almost surely true or almost surely false for finite substructures of $X$.
\end{theorem}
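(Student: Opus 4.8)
The key observation is that choosing a substructure of $B_n(x)$ uniformly at random is the same as retaining each vertex of $B_n(x)$ independently with probability $1/2$; this is the site-percolation model alluded to in the keywords, and I will write $S_n$ for the resulting random induced substructure. Fix a sentence $\phi$ and let $k$ be its quantifier rank. Since $X$ has bounded degree, so do $B_n(x)$ and $S_n$, and I can invoke the Hanf locality theorem for bounded-degree structures (see \cite{EF}, and \cite{Ga} for the underlying locality phenomenon): there are a radius $r=r(k)$ and a threshold $t=t(k)$ so that the truth value of $\phi$ in any finite structure $S$ of degree bounded by that of $X$ depends only on the \emph{Hanf type} of $S$, that is, the function assigning to each isomorphism type $\tau$ of a rooted $r$-ball the truncated count $\min\bigl(t,\,\#\{v\in S:\,B_r(v)\cong\tau\}\bigr)$. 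The whole problem thus reduces to showing that the Hanf type of $S_n$ equals a fixed value $h^*$ with probability tending to $1$, and that $h^*$ is independent of $x$.

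First I would record that the $r$-ball of a surviving vertex $v$ inside $S_n$ is a \emph{local} function of the percolation: any path of length at most $r$ issuing from $v$ stays inside $B_r(v)$ as computed in $X$, so $B_r^{S_n}(v)$ is determined entirely by the isomorphism type of $B_r(v)$ in $X$ together with the inclusion pattern on $B_r(v)$. Call an $r$-ball type $\tau$ \emph{realizable} if some vertex $v$ of $X$ and some inclusion pattern on $B_r(v)$ produce $B_r^{S_n}(v)\cong\tau$. Only finitely many $r$-ball types occur because the degree is bounded, and realizability is a property of $X$ alone. The candidate limit is the Hanf type $h^*$ assigning $t$ to every realizable type and $0$ to every non-realizable type; a non-realizable type is never witnessed, so its count is identically $0$.

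The crux is to prove that every realizable type is, with probability tending to $1$, witnessed at least $t$ times, and this is where the duplicate substructure property enters. If $\tau$ is realized at $v_0$ via an inclusion pattern on the finite substructure $B_r(v_0)$, then duplicating $B_r(v_0)$ yields a disjoint isomorphic copy whose centre also realizes $\tau$: one transports the inclusion pattern into the copy and excludes the finitely many vertices lying within distance $r$ of the new centre but outside the copy, so that no path can escape the copy in $S_n$. Iterating the duplication on the union of the balls produced so far gives infinitely many realization sites for $\tau$. Since $X$ has bounded degree, a greedy selection extracts arbitrarily many such sites pairwise at distance greater than $2r$; their $r$-balls are then disjoint, so the corresponding realization events are independent, each of some fixed positive probability $p_\tau$. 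As $X$ is connected, $B_n(x)$ exhausts $X$, so for large $n$ any prescribed finite family of sites lies in $B_n(x)$, and a Chernoff estimate forces the count of $\tau$-vertices above $t$ with probability approaching $1$. A union bound over the finitely many realizable types gives that the Hanf type of $S_n$ equals $h^*$ with probability tending to $1$. Combined with the Hanf reduction, $\phi$ holds on $S_n$ with probability tending to $\mathbf 1[\phi\text{ holds for }h^*]\in\{0,1\}$, and since $h^*$ is independent of $x$ the same limit holds for every $x$; hence $\phi$ is almost surely true or almost surely false for finite substructures of $X$.

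The main obstacle is precisely this concentration step. The duplicate substructure property only guarantees a copy disjoint from a single prescribed substructure, not from all of $X$, so the new realization centre may acquire extra neighbours; I expect the delicate point to be verifying that excluding those extra vertices genuinely reproduces $\tau$ in $S_n$, and that the selected sites can be made simultaneously numerous and far enough apart to be truly independent inside $B_n(x)$. Once independence of a growing family of fixed-positive-probability events is secured, the passage to a $0/1$ limit is routine; ensuring that a single radius and threshold from the Hanf theorem serve all the structures $S_n$ is the remaining detail to watch.
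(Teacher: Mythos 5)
Your proposal is correct, but it follows a genuinely different architecture from the paper's. The paper never invokes Hanf's theorem: it axiomatizes an explicit almost-sure theory $T$, with one axiom $\sigma_F$ per isomorphism type of finite structure ($\sigma_F$ asserting a \emph{closed} copy of $F$ when $F$ embeds in $X$, and no copy at all otherwise), proves $T$ complete via an Ehrenfeucht game built on a ``disjoint ball extension condition'' (Theorem~\ref{th:equivalence} --- in effect a hand-rolled locality argument playing the role of your appeal to Hanf locality), proves each axiom almost surely true (Lemma~\ref{le:sigma-i-generic}), and concludes by completeness plus finiteness of proofs. Your probabilistic kernel is essentially that of Lemma~\ref{le:sigma-i-generic}: duplication gives infinitely many pairwise disjoint witness sites, disjoint balls give independence, and the failure probability $(1-2^{-k})^m$ tends to $0$. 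Your exclusion trick --- requiring the percolation to delete the finitely many vertices of $B_r(v')$ outside the transported copy, so the copy's centre reproduces the ball type --- is the counterpart of the paper's maximality trick, where $F_1$ is chosen with $\abs{B_1(F_1)}$ maximal so that every isomorphic copy of $B_1(F_1)$ is automatically a full $1$-neighbourhood. The two delicate points you flagged do go through: since every tuple witnessing a Gaifman edge at a vertex within distance $r-1$ of the centre has all its entries inside $B_r(v)$, the $r$-ball of a surviving vertex in the random substructure is computed entirely from the inclusion pattern on $B_r(v)$ (and the image of a ball under an isomorphism into $X$ lies inside the ball of the image, the paper's Lemma~\ref{le:isomorphisms}, so your event does force the transported type); and because any substructure has degree bounded by that of $X$, a single pair $(r,t)$ from the bounded-degree Hanf theorem serves all the structures $S_n$ at once. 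What your route buys is brevity and a concrete limit object: granting the locality theorem as a black box, the zero-one law falls out of concentration of a single limiting Hanf type $h^*$ depending only on $X$, which makes independence of the base point $x$ immediate and dispenses with constructing any limit theory. What the paper's route buys is the theory $T$ itself, which is reused downstream: completeness yields Theorem~\ref{th:main-version2}, the decidability criterion of Section~\ref{se:decidability}, and elementary equivalence of random infinite substructures (Theorem~\ref{th:random}) --- none of which are delivered by the Hanf-type computation alone.
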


We may think of the structure $X$ from Theorem~\ref{th:main} as inducing a
zero-one law on the class $\C(X)$ of its finite
substructures. $\C(X)$ is closed under substructures and
disjoint union.  Also, $\C(X)$ is pseudo-connected in the following sense.

\begin{definition} 
A class $\CF$ of finite structures is \emph{pseudo-connected} if, for
every $Y\in\CF$ there is an embedding of $Y$ into a connected member of
$\CF$.
\end{definition}

\begin{theorem}\label{th:main-version2}
Let $\CF$ be a pseudo-connected class of finite structures of bounded
degree closed under substructures and disjoint unions, and let $S$ be the
disjoint union of all members of $\CF$.  We have:
\begin{enumerate}
\item 
There is an infinite structure $X$, called an \emph{ambient structure} for
$\CF$, such that $X$ satisfies the conditions of Theorem~\ref{th:main} and
$\CF$ is the collection of (isomorphic copies of) substructures of $X$.
\item
An arbitrary first-order sentence $\phi$ is almost surely true for $\CF$
if and only if it holds in $X$.
\end{enumerate}
\end{theorem}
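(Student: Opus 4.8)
The plan is to treat the two assertions separately, constructing the ambient structure $X$ for part~(1) and then deriving the equivalence of part~(2) from Theorem~\ref{th:main}. For part~(1) I would build $X$ as the union of an increasing chain of connected members of $\CF$. List the (countably many) isomorphism types of members of $\CF$ so that each type occurs infinitely often, say as $Y_1, Y_2, \dots$. Starting from a nonempty connected $X_1\in\CF$, and having built the connected $X_i\in\CF$, form $X_i\sqcup Y_{i+1}$, which lies in $\CF$ since $\CF$ is closed under disjoint unions, and use pseudo-connectedness to embed it into a connected member $X_{i+1}\in\CF$, regarding $X_i$ as an induced substructure of $X_{i+1}$. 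Then $X=\bigcup_i X_i$ is connected, since any two of its elements already lie in a common connected $X_i$, and it is infinite because the chain strictly increases. Every member of $\CF$ embeds into some $X_i$, and conversely every finite substructure of $X$ is contained in some $X_i$ and hence, by closure under substructures, lies in $\CF$; thus $\CF$ is precisely the class of substructures of $X$. Bounded degree passes to $X$: were a vertex to have $d+1$ neighbours in $[X]$, where $d$ bounds the degrees throughout $\CF$, the finitely many relational tuples witnessing those edges would all appear in a single $X_m$, contradicting $X_m\in\CF$.

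The delicate requirement is the duplicate substructure property, and I expect it to be the main obstacle. The useful reduction is a distance criterion: if two substructures of $X$ lie at distance at least $2$ in $[X]$, then no single relational tuple can meet both, so there are no edges between them and they are disjoint in the sense of the paper. For binary vocabularies this is automatic, since a copy of $F\sqcup F$ embedded as an induced substructure already has no edges between its two halves; the difficulty is genuinely higher arity, where a tuple using outside elements could link the two copies in $[X]$ even though none links them internally. To force the copies apart I would refine the construction so that each type recurs infinitely often far out in $X$. Here I would use that, by closure under disjoint unions and pseudo-connectedness, $\CF$ has connected members containing arbitrarily many disjoint copies of any fixed member, so with bounded degree these connected members have unbounded diameter, providing the room to attach successive copies along a lengthening part of the structure. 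The care needed is to arrange the embeddings $X_i\hookrightarrow X_{i+1}$ so that they do not introduce shortcuts collapsing these distances in the limit; granting such control, every finite $F\subseteq X$ has an isomorphic copy at distance at least $2$, and the duplicate substructure property follows, so $X$ satisfies all hypotheses of Theorem~\ref{th:main}.

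For part~(2) I would first invoke Theorem~\ref{th:main}: since $X$ meets its hypotheses, every $\phi$ is almost surely true or almost surely false for the finite substructures of $X$, so $\phi$ has a well-defined almost-sure truth value. It then remains to identify this value, on the one hand with almost-sure truth for $\CF$ and on the other with truth in $X$. The first identification rests on $\CF$ being exactly the substructures of $X$: the statistics of $\phi$ over random substructures of the balls $B_n(x)$ in $X$ and over the corresponding exhaustion of $S$ by its components should be shown to have the same limit, since both present the same finite substructures with the same asymptotic frequencies. The second identification is the model-theoretic heart: I would argue that a typical large substructure $M$ of $B_n(x)$ is elementarily equivalent to $X$ up to any prescribed quantifier rank. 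This is an Ehrenfeucht--Fra\"iss\'e argument powered by the duplicate substructure property: every existential move in one structure can be answered in the other by producing a fresh disjoint copy of the relevant finite configuration, which is available in $X$ by the duplicate property and in $M$ because a large random substructure of a bounded-degree structure contains unboundedly many such disjoint copies. Hence $M\models\phi$ if and only if $X\models\phi$ for typical $M$, so the almost-sure value of $\phi$ equals its truth value in $X$, closing the chain of equivalences.

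In summary, the two substantive steps are the construction underlying part~(1) and the elementary-equivalence argument underlying part~(2), with the rest following the standard template. I expect the genuine obstacle to be the former, specifically securing the duplicate substructure property for the \emph{connected} structure $X$ when the vocabulary has relations of arity greater than two: disjointness there is a distance-$2$ condition in $[X]$ rather than mere induced-substructure disjointness, and controlling these distances in the limit is what the construction must be designed to guarantee. The Ehrenfeucht--Fra\"iss\'e bridge in part~(2), by contrast, reuses the duplicate property in the now-familiar way and should present no new difficulty once that property is in hand.
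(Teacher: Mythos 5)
This draft of the paper states Theorem~\ref{th:main-version2} without a separate written-out proof (the ingredients are Theorem~\ref{th:equivalence} and Section~\ref{se:theory}), so your proposal must be judged on its own merits, and it has two problems: a fixable gap in part (1) and a fatal error in part (2). For part (1), your chain construction $X=\bigcup_i X_i$ is the right skeleton, and you correctly identify that disjointness is a distance-$2$ condition in $[X]$; but your proposed repair is the wrong tool. Pseudo-connectedness only asserts that $X_i\sqcup Y_{i+1}$ embeds into \emph{some} connected member of $\CF$; it gives you no control over where that embedding places anything, so there is no way to ``arrange the embeddings $X_i\hookrightarrow X_{i+1}$ so that they do not introduce shortcuts,'' and unbounded diameter of connected members does not supply such control. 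Fortunately no control is needed. Since each isomorphism type is inserted infinitely often and the copy inserted at stage $j$ is set-disjoint from $X_{j-1}$, the limit $X$ contains infinitely many pairwise set-disjoint induced copies of every finite substructure $F$. By bounded degree, $B_1(F)\setminus F$ is finite; since the copies are pairwise set-disjoint, at most $\abs{B_1(F)\setminus F}$ of them can meet this set, so all but finitely many copies lie at distance at least $2$ from $F$ in $[X]$, which is exactly the paper's disjointness. The duplicate substructure property thus follows by pigeonhole, with no geometric control of the limit.

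Part (2) is where the proposal genuinely fails: your Ehrenfeucht--Fra\"iss\'e claim that a typical large substructure $M$ of $B_n(x)$ is equivalent to $X$ up to any prescribed quantifier rank is false --- and so is the literal statement being proved; the ``$X$'' in item (2) is evidently a slip for the otherwise-unused structure $S$ defined in the statement. Concretely, let $\U$ consist of one binary relation and let $X$ be the bi-infinite path (its class of finite substructures satisfies all hypotheses): a typical substructure of $B_n(x)$ contains an isolated vertex while $X$ has none, so the quantifier-rank-$2$ sentence ``some element has no neighbor'' is almost surely true but false in $X$. Your duplicator strategy of answering every move with a fresh disjoint copy breaks precisely when the spoiler plays \emph{near} a previously chosen point: spoiler picks the isolated vertex of $M$, then a neighbor in $X$ of the duplicator's reply. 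In general, each axiom $\sigma_F$ of Section~\ref{se:theory} with $F\in\A\cap\C$ nonempty is almost surely true (Lemma~\ref{le:sigma-i-generic}) yet false in every connected infinite structure, which has no nonempty finite closed substructure. The correct route is through the almost sure theory $T$: by part (1) and Section~\ref{se:theory}, the axioms of $T$ are almost surely true and $T$ is complete (Lemma~\ref{le:complete}, via Theorem~\ref{th:equivalence}), so $\phi$ is almost surely true if and only if $T\vdash\phi$; and $S\models T$, since each member of $\CF$ sits inside $S$ as a closed substructure and every finite substructure of $S$ lies in $\CF$. Hence the almost-sure value of $\phi$ equals its truth value in $S$, not in $X$, and independence from the choice of ambient structure follows because $T$ and $S$ depend only on $\CF$.
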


Thus such a class $\CF$ always has an ambient structure, and
  different ambient structures induce the same zero-one law on $\CF$.

The proof of Theorem~\ref{th:main} proceeds along a well known path. We show that certain axioms are almost surely true for finite substructures of $X$ and that the theory with those axioms is complete. Section~\ref{se:theory} contains the proof of Theorem~\ref{th:main} and a discussion of the almost sure theory. In Sections~\ref{se:random} and~\ref{se:trees} we show that random substructures of $X$ are elementarily equivalent but not necessarily isomorphic.  This result may have application to the theory of percolation. See~\cite{BS1, BS2}. We thank Andreas Blass for useful discussions related to Section~\ref{se:trees}.
 
Now we present some examples. It is straightforward to check that Theorem~\ref{th:main} applies to the following structures.

\begin{enumerate}
\item The Cayley diagram of a finitely generated infinite group. Here $\U$ consists of one binary relation for each generator.
\item\label{ex:underlying graph} An infinite connected vertex-transitive graph of finite degree. For example the graph obtained from a Cayley diagram of the type just mentioned by removing all loops and combining all edges between any two distinct vertices joined by an edge into a single undirected edge. See~\cite{JS} for non-Cayley examples.
\item The Cayley diagram of a free finitely generated monoid.
\item The full binary tree; i.e., the tree with one vertex of degree two and all others of degree three. More generally the full $k$-ary tree for $k \ge 1$.
\item An infinite connected locally finite and finite dimensional simplicial complex whose automorphism group is transitive on zero-simplices. There is one $n+1$-ary relation for each dimension $n$.
\end{enumerate}

We conclude this section with an example of a class of structures which satisfies the geometric zero-one law, but for which neither the labeled nor unlabeled law holds. For this purpose a unary forest is defined to be a directed graph such that each vertex has at most one incoming edge and at most one outgoing edge.

A unary tree is a connected unary forest; that is, a directed graph consisting of a single finite or infinite directed path. $\CF$ is the class of finite unary forests with edges labeled by $0$ and $1$; $\U$ consists of two binary relations, one for each edge label. $\CF$ is closed under isomorphism, disjoint union, and restriction to components.

\begin{theorem}\label{th:more}
$\CF$, the class of finite unary forests with edges labeled by $0$ and $1$, obeys the geometric zero-one law but does not obey either the labeled or unlabeled law.
\end{theorem}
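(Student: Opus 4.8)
The plan is to establish the positive half via Theorem~\ref{th:main-version2} and the negative half by exhibiting a single first-order sentence whose probability, under both the labeled and the unlabeled distributions on $\CF$, tends to a limit strictly between $0$ and $1$.

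For the geometric zero-one law I would verify that $\CF$ meets the hypotheses of Theorem~\ref{th:main-version2}. Each vertex of a unary forest has in-degree and out-degree at most one, so its Gaifman degree is at most two; thus $\CF$ has bounded degree, and it is evidently closed under substructures and disjoint unions. The one point requiring an argument is pseudo-connectedness. Given $Y\in\CF$ with path components $P_1,\dots,P_m$, I would build a single directed path $Q$ by concatenating $P_1,\dots,P_m$ and inserting one extra separating vertex between consecutive blocks, choosing the labels of $Q$ on each block to agree with the corresponding $P_i$. Because the separating vertices sit strictly between the blocks, the induced substructure of $Q$ on $V(P_1)\cup\dots\cup V(P_m)$ has no edges across blocks and reproduces each $P_i$ exactly, so $Y$ embeds into the connected member $Q$ of $\CF$. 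Theorem~\ref{th:main-version2} then yields the geometric zero-one law for $\CF$.

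For the failure of the other two laws I would use the sentence $\phi$ asserting the existence of an isolated vertex, namely $\exists x\,\forall y\,\bigl(\neg R_0(x,y)\wedge\neg R_1(x,y)\wedge\neg R_0(y,x)\wedge\neg R_1(y,x)\bigr)$, where $R_0,R_1$ are the two edge relations. A labeled member of $\CF$ on $\{1,\dots,n\}$ is a set of edge-colored directed paths, so the exponential generating function for the whole structure is $\exp\!\bigl(x/(1-2x)\bigr)$, the inner series $x/(1-2x)=\sum_{b\ge1}2^{\,b-1}x^b$ counting a single colored path of each size. Marking isolated vertices with a variable $u$ replaces the size-one term $x$ by $ux$, giving $\exp\!\bigl(x/(1-2x)+(u-1)x\bigr)$; differentiating $r$ times in $u$ at $u=1$ shows that the $r$-th factorial moment of the number of isolated vertices equals $[x^{n-r}]G/[x^n]G$ with $G(x)=\exp\!\bigl(x/(1-2x)\bigr)$. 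Since $G$ is Hayman-admissible with its singularity at $x=1/2$, these ratios tend to $(1/2)^r$, which are exactly the factorial moments of a Poisson law of mean $1/2$. Hence the number of isolated vertices converges in distribution to $\mathrm{Poisson}(1/2)$ and $\Pr[\phi]\to 1-e^{-1/2}\in(0,1)$, so the labeled law fails.

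For the unlabeled law, writing $p_n$ for the number of isomorphism classes in $\CF$ of size $n$, deleting one isolated vertex is a bijection between the size-$n$ classes having at least one isolated vertex and all size-$(n-1)$ classes, so $\Pr[\phi]=p_{n-1}/p_n$. These classes are multisets of path types, with $2^{\,b-1}$ types of size $b$, whence $\sum_n p_n x^n=\prod_{b\ge1}(1-x^b)^{-2^{\,b-1}}$, and the logarithm of this product is asymptotic to $x/(1-2x)$ near its radius of convergence $1/2$. A saddle-point (Meinardus-type) estimate then gives $p_{n-1}/p_n\to 1/2$, so $\Pr[\phi]\to 1/2\in(0,1)$ and the unlabeled law fails as well. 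I expect the main obstacle to be precisely these coefficient-ratio asymptotics at the essential singularity $x=1/2$: the singularity is neither algebraic nor logarithmic, so in place of a transfer theorem I would invoke Hayman admissibility in the labeled case and a Meinardus/saddle-point estimate in the unlabeled case. A soft version in fact suffices, since for the conclusion it is enough that $\Pr[\phi]$ stay bounded away from $0$ and $1$, which first- and second-moment bounds already supply once $[x^{n-r}]G/[x^n]G$ is controlled for $r=1,2$.
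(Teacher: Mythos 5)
Your positive half is essentially the paper's. The paper exhibits an ambient infinite labeled unary tree containing every finite $0$--$1$ sequence among the labels of its subtrees and invokes Theorem~\ref{th:main}; you instead verify the hypotheses of Theorem~\ref{th:main-version2}, whose part~(1) manufactures such an ambient structure. Your pseudo-connectedness argument is correct: omitting the separating vertex leaves consecutive blocks at distance $2$ in the path $Q$, so the induced substructure has no edges across blocks. The two formulations are interchangeable here.

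For the negative half you take a genuinely different route, and it contains one genuine gap. The paper needs no asymptotic analysis at all: it cites Compton's criterion \cite[Theorem~5.9]{C1} --- the labeled law fails if $\sum a_nt^n/n!$ has finite positive radius of convergence, and the unlabeled law fails if $\sum b_nt^n$ has radius strictly between $0$ and $1$ --- and then sandwiches $2^{n-1}\le b_n\le 3^{n-1}$ and $2^{n-1}n!\le a_n\le 3^{n-1}n!$ by labeling the edges of a single path with $\{0,1,2\}$ and deleting the $2$-labeled edges. Your plan instead computes limiting probabilities of a concrete sentence. The labeled half is workable: the EGF $\exp\bigl(x/(1-2x)\bigr)$ is correct, the factorial-moment identity is right, and Hayman admissibility of this function is standard, giving the Poisson$(1/2)$ limit. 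The unlabeled half is where you are exposed. First, Meinardus's theorem does not apply: its hypotheses require the Dirichlet series $\sum_b a_b\,b^{-s}$ of the exponents to converge in some half-plane, and with $a_b=2^{b-1}$ it converges nowhere. Second, your fallback claim that ``a soft version suffices'' via first- and second-moment bounds concerns only $G$, i.e., the labeled distribution; in the unlabeled model $\Pr[\phi]$ is the \emph{exact} ratio $p_{n-1}/p_n$, no moment method is available, and the crude bounds $2^{n-1}\le p_n\le 3^{n-1}$ cannot keep this ratio away from $0$. So in the unlabeled case you genuinely need the coefficient asymptotics you were hoping to soften away. The gap is repairable: $\log\prod_b(1-x^b)^{-2^{b-1}}=\sum_{j\ge1}\frac{1}{j}\cdot\frac{x^j}{1-2x^j}=\frac{x}{1-2x}+A(x)$ with $A$ analytic for $\abs{x}<2^{-1/2}$, so the unlabeled OGF is $e^{A(x)}\exp\bigl(x/(1-2x)\bigr)$ and the same saddle-point analysis as in the labeled case yields $p_{n-1}/p_n\to 1/2$. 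In short: your route is viable and buys more --- explicit limit probabilities $1-e^{-1/2}$ and $1/2$, plus a Poisson limit law --- but at the cost of nontrivial analytic machinery and one inapplicable citation, where the paper's argument is elementary counting plus a black-box criterion.
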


\begin{proof} Pick an infinite labeled unary tree, $X$, such that all finite sequences of $0$'s and $1$'s appear as the labels of subtrees of $X$; observe that  $X$ satisfies the hypotheses of Theorem~\ref{th:main}. Thus $\CF$ obeys the geometric zero-one law.
 
To show that $\CF$ does not satisfy the labeled or unlabeled law, we apply \cite[Theorem~5.9]{C1}. Let $\AF_n$ be the set of structures in $\CF$ with universe $\{1, 2,\ldots, n\}$, and $\BF_n$ a set of representatives for the isomorphism classes of structures in $\AF_n$. The cardinalities of $\AF_n$ and $\BF_n$ are denoted $a_n$ and $b_n$ respectively. It follows immediately from \cite[Theorem~5.9]{C1} that if $\sum_{n=1}^\infty \frac{a_n}{n!}t^n$ has finite positive radius of convergence, then $\CF$ does not obey the labeled zero-one law. Likewise if $\sum_{n=1}^\infty b_nt^n$ has radius of convergence strictly between $0$ and $1$, then $\CF$ does not obey the unlabeled zero-one law.

Consider a single unary tree with $n$ vertices. The $2^{n-1}$ different ways of labeling the edges of this tree yield pairwise non-isomorphic labeled trees; and for each labeled tree, the $n!$ different ways of labeling the vertices yield different structures on $\{1,2,\ldots, n\}$. Thus $2^{n-1} \le b_n$ and $2^{n-1}n! \le a_n$. On the other hand each unary forest of size $n$ is isomorphic to a structure obtained by labeling the edges of a unary tree of size $n$ with letters from the alphabet $\{0,1,2\}$ and then deleting all edges with label $2$. It follows that $2^{n-1} \le b_n \le 3^{n-1}$ and $2^{n-1}n! \le a_n \le 3^{n-1}n!$. By the results mentioned above neither the labeled nor unlabeled zero-one law holds for $\CF$.
\end{proof}

\section{A Sufficient Condition for Elementary Equivalence}

The main result of this section is that two structures which satisfy the following condition are elementarily equivalent.

\begin{definition}\label{de:disjoint ball} 
Two structures satisfy the disjoint ball extension condition if whenever either structure contains a ball $B_n(x)$ disjoint from a finite substructure $F$, and the other structure has a substructure $F'$ isomorphic to $F$, then the other structure also contains $B_n(y)$ disjoint from $F'$ isomorphic to $B_n(x)$ by an isomorphism matching $x$ to $y$.
\end{definition}

\begin{lemma}\label{le:isomorphisms}
Let $X$ and $X'$ be structures and $Y$ a substructure of $X$. If $\alpha$ is an isomorphism of $B_n(Y)$ to a substructure of $Y'$, the following conditions hold.
\begin{enumerate}
 \item If $x_1\in B_{n-1}(Y)$ and $x_2\in B_n(Y)$ are joined by an edge in $[X]$, then $\alpha(x_1)$ and $\alpha(x_2)$ are joined by an edge in $[X']$.
 \item For any $x\in B_n(Y)$, $d(x, Y)\ge d(\alpha(x), \alpha(Y))$.
 \item\label{it:inclusion} $\alpha(B_n(Y))\subseteq B_n(\alpha(Y))$.
 \item If $\alpha$ $maps(B_n(Y))$ onto $B_n(\alpha(Y))$, then for any $x\in B_n(Y)$, $d(x, Y) = d(\alpha(x), \alpha(Y))$.
\end{enumerate}
\end{lemma}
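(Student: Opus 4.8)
The plan is to establish the four conditions in the order given, deriving each from its predecessors, so that the only genuinely new work lies in the first one. Throughout I write $Z=\alpha(B_n(Y))$ for the image, which is an induced substructure of $X'$, and I use freely that an induced substructure inherits exactly the relations holding among its elements, so a relation holds in $Z$ if and only if it holds in $X'$ on the same tuple.

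First I would prove the first condition, which is the heart of the matter. Suppose $x_1\in B_{n-1}(Y)$ and $x_2\in B_n(Y)$ are joined by an edge of $[X]$, witnessed by a relation $R\in\U$ and a tuple $z_1,\dots,z_\ell$ with $R(z_1,\dots,z_\ell)$ true in $X$ and $x_1,x_2\in\{z_1,\dots,z_\ell\}$. The key observation is that the entire witnessing tuple lies inside $B_n(Y)$: each $z_j$ either equals $x_1$ or shares the true relation $R(z_1,\dots,z_\ell)$ with $x_1$ and is therefore joined to $x_1$ by an edge of $[X]$, so in either case $d(z_j,Y)\le d(x_1,Y)+1\le n$. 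Since $B_n(Y)$ is an induced substructure containing all the $z_j$, the relation $R(z_1,\dots,z_\ell)$ holds in $B_n(Y)$; applying the isomorphism $\alpha$ gives $R(\alpha(z_1),\dots,\alpha(z_\ell))$ in $Z$, hence in $X'$. As $\alpha$ is injective, $\alpha(x_1)$ and $\alpha(x_2)$ are distinct elements of this tuple, so they are joined by an edge of $[X']$. This is precisely the step where the asymmetric hypothesis $x_1\in B_{n-1}(Y)$ is used, since it is exactly what prevents the witnessing tuple from escaping $B_n(Y)$; I expect this to be the main obstacle, and the rest to be formal consequences.

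The second condition then follows by transporting shortest paths. Given $x\in B_n(Y)$, fix a geodesic $x=x_0,x_1,\dots,x_k$ in $[X]$ with $x_k\in Y$ and $k=d(x,Y)\le n$; because a suffix of a geodesic is again a geodesic, $d(x_i,Y)=k-i$. For each $i\ge 1$ the nearer endpoint $x_i$ satisfies $d(x_i,Y)=k-i\le n-1$, so $x_i\in B_{n-1}(Y)$, while $x_{i-1}\in B_n(Y)$; applying the first condition to the edge between $x_i$ and $x_{i-1}$ shows $\alpha(x_{i-1})$ and $\alpha(x_i)$ are adjacent in $[X']$. Hence $\alpha(x_0),\dots,\alpha(x_k)$ is a walk from $\alpha(x)$ to $\alpha(x_k)\in\alpha(Y)$, giving $d(\alpha(x),\alpha(Y))\le k=d(x,Y)$. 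The third condition is then immediate: if $x\in B_n(Y)$ then $d(\alpha(x),\alpha(Y))\le d(x,Y)\le n$, so $\alpha(x)\in B_n(\alpha(Y))$, which is the claimed inclusion.

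Finally, for the fourth condition I would invoke the inequality of the second condition a second time, now for the inverse map. When $\alpha$ carries $B_n(Y)$ onto $B_n(\alpha(Y))$, the map $\alpha\inv$ is an isomorphism of $B_n(\alpha(Y))$ onto the substructure $B_n(Y)$ of $X$, and $\alpha\inv(\alpha(Y))=Y$; thus the hypotheses of the lemma hold with $X'$, $X$, $\alpha(Y)$, $\alpha\inv$ playing the roles of $X$, $X'$, $Y$, $\alpha$. Applying the already-established inequality to $\alpha\inv$ at the point $\alpha(x)$ yields $d(\alpha(x),\alpha(Y))\ge d(\alpha\inv(\alpha(x)),\alpha\inv(\alpha(Y)))=d(x,Y)$, which together with the second condition gives the desired equality. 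The only subtlety to verify here is that the ball $B_n(\alpha(Y))$ computed in $X'$ coincides with the domain of $\alpha\inv$, and this is exactly what the onto hypothesis supplies.
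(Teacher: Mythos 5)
Your proposal is correct and follows essentially the same route as the paper: prove condition (1) by noting the witnessing tuple for the edge lies entirely in $B_n(Y)$ (using $x_1\in B_{n-1}(Y)$) so that $\alpha$ preserves the relation, deduce (2) by transporting paths and (3) as an immediate consequence, and obtain (4) by applying (2) to $\alpha\inv$. The paper compresses the last three steps into ``the first part implies the next two, and the last one holds by symmetry''; your write-up simply makes explicit the geodesic-transport and inverse-map details, including the minor point that injectivity of $\alpha$ keeps $\alpha(x_1)$ and $\alpha(x_2)$ distinct.
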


\begin{proof}
If $x_1, x_2$ are as above, then $R(t_1,\ldots,t_k)$ is true for some relation $R\in \U$ and elements $t_1,\ldots, t_k \in X$ with $x_1, x_2\in \{t_1,\ldots, t_k\}$. It follows that $d(x_1, t_i)\le 1$ for all $i$, which implies $\{t_1,\ldots, t_k\}\subseteq B_n(Y)$. As $\alpha$ is an isomorphism, $R(\alpha(t_1),\ldots, \alpha(t_k))$ holds in $X'$. Thus the first part is proved. The first part implies the next two, and the last one holds by symmetry. 
\end{proof}

\begin{lemma}\label{le:isometries} 
Let $X$ and $X'$ be structures. Suppose that for some $n\ge 1$ and substructures $Y\subseteq X$, $Y'\subseteq X'$ there is an isomorphism $\alpha:B_n(Y)\to B_n(Y')$ with $\alpha(Y)=Y'$. Then for any substructure $Z$ of $X$ with $B_m(Z)\subseteq B_{n-1}(Y)$, $\alpha$ maps $B_m(Z)$ isomorphically to $B_m(\alpha(Z))$.
\end{lemma}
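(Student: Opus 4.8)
The plan is to reduce the lemma to the single set-theoretic identity $\alpha(B_m(Z))=B_m(\alpha(Z))$. Since $B_m(Z)\subseteq B_{n-1}(Y)\subseteq B_n(Y)$ lies in the domain of $\alpha$, the restriction $\alpha|_{B_m(Z)}$ is automatically an isomorphism onto the induced substructure carried by its image; so once the image is identified as $B_m(\alpha(Z))$, nothing more is needed. I would prove the two inclusions of this identity separately.

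First I would record the isometry property that makes everything work. Because $\alpha$ maps $B_n(Y)$ \emph{onto} $B_n(Y')=B_n(\alpha(Y))$, part~(4) of Lemma~\ref{le:isomorphisms} gives $d(x,Y)=d(\alpha(x),\alpha(Y))$ for every $x\in B_n(Y)$; the same applies to $\alpha^{-1}\colon B_n(Y')\to B_n(Y)$, which satisfies $\alpha^{-1}(Y')=Y$. The inclusion $\alpha(B_m(Z))\subseteq B_m(\alpha(Z))$ is then cheap: $\alpha|_{B_m(Z)}$ is an isomorphism onto a substructure of $X'$, so part~(3) of Lemma~\ref{le:isomorphisms}, read with center set $Z$ and radius $m$, yields at once $\alpha(B_m(Z))\subseteq B_m(\alpha(Z))$.

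The substantive step is the reverse inclusion $B_m(\alpha(Z))\subseteq\alpha(B_m(Z))$, which I would establish by lifting geodesics through $\alpha^{-1}$. Given $y'\in B_m(\alpha(Z))$, I would pick a geodesic $y'_0,y'_1,\dots,y'_k=y'$ in $[X']$ of length $k\le m$ with $y'_0=\alpha(z)$ for some $z\in Z$, and define $y_i=\alpha^{-1}(y'_i)$ by induction on $i$. The base case is $y_0=z\in Z$. For the inductive step, the isometry property gives $d(y'_i,Y')=d(y_i,Y)\le n-1$ (since $y_i\in B_i(Z)\subseteq B_m(Z)\subseteq B_{n-1}(Y)$), whence $d(y'_{i+1},Y')\le d(y'_i,Y')+1\le n$; thus $y'_{i+1}\in B_n(Y')$ and $y_{i+1}=\alpha^{-1}(y'_{i+1})$ is defined. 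Applying part~(1) of Lemma~\ref{le:isomorphisms} to $\alpha^{-1}$, with $y'_i\in B_{n-1}(Y')$ and $y'_{i+1}\in B_n(Y')$ adjacent, shows that $y_{i+1}$ is adjacent to $y_i$, so the $y_i$ form a path out of $z$ and $y_{i+1}\in B_{i+1}(Z)$. At the end $y_k\in B_k(Z)\subseteq B_m(Z)$ and $\alpha(y_k)=y'$, which is exactly the reverse inclusion.

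The main obstacle is precisely the bookkeeping that keeps every lifted vertex inside the domain $B_n(Y')$ of $\alpha^{-1}$ and inside the region $B_{n-1}(Y')$ where part~(1) of Lemma~\ref{le:isomorphisms} guarantees that edges are reflected; this is where the hypothesis $B_m(Z)\subseteq B_{n-1}(Y)$, with its one-unit margin, is indispensable. The margin is what permits the induction to step one vertex outward, from $B_{n-1}(Y')$ to a neighbor in $B_n(Y')$, without ever leaving the domain of $\alpha^{-1}$; were the hypothesis weakened to $B_m(Z)\subseteq B_n(Y)$, the lift could fail at the final vertex of the geodesic, and $\alpha$ would no longer be guaranteed to carry $B_m(Z)$ onto the full ball $B_m(\alpha(Z))$.
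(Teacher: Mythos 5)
Your proof is correct and follows essentially the same route as the paper's: both rest on Lemma~\ref{le:isomorphisms} together with lifting a path from $\alpha(Z)$ through $\alpha^{-1}$, with the one-unit margin in $B_{n-1}(Y)$ keeping each lifted vertex inside the domain where edges are reflected. The only difference is organizational: the paper runs the lifting as a contradiction (the first vertex of a geodesic escaping $B_{n-1}(Y')$ would pull back to a point of $B_m(Z)$ outside $B_{n-1}(Y)$) and then finishes with two applications of part~(\ref{it:inclusion}), whereas you carry out the same lift as a direct induction establishing $\alpha(B_m(Z))=B_m(\alpha(Z))$.
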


\begin{proof}
First suppose that $B_m(\alpha(Z))\subseteq B_{n-1}(Y')$. Lemma~\ref{le:isomorphisms}(\ref{it:inclusion}) applied to $\alpha$ and $\alpha\inv$ yields $\alpha(B_m(Z)) \subseteq B_m(\alpha(Z))$ and $\alpha\inv(B_m(\alpha(Z))) \subseteq B_m(Z)$. It follows immediately that $\alpha$ maps $B_m(Z)$ isomorphically to $B_m(\alpha(Z))$ as desired.
 
Thus it suffices to show that $B_m(\alpha(Z))\subseteq B_{n-1}(Y')$. Assume not. As $\alpha(Z) \subseteq B_{n-1}(Y')$, there must be an element $\alpha(x) \in B_n(Y') - B_{n-1}(Y')$ with $d(\alpha(x), \alpha(Z))=k\le m$. Consequently there is a path in $[X']$ from some $\alpha(z)\in \alpha(Z)$ to $\alpha(x)$ of length at most $m$ and with all vertices of the path in $B_m(\alpha(Z))$. Without loss of generality assume that $\alpha(x)$ is the first point on that path not in $B_{n-1}(Y')$. But then Lemma~\ref{le:isomorphisms} implies $x\in B_m(Z) - B_{n-1}(Y)$ contrary to hypothesis.
\end{proof}

\begin{theorem}\label{th:equivalence} If two locally finite structures satisfy the disjoint ball extension condition, then they are elementarily equivalent. 
\end{theorem}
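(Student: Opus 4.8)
The plan is to derive elementary equivalence from the Ehrenfeucht--Fra\"{\i}ss\'e characterization: since the vocabulary $\U$ is finite and purely relational, $X\equiv X'$ as soon as the Duplicator wins the $r$-move Ehrenfeucht--Fra\"{\i}ss\'e game on $X$ and $X'$ for every $r$. The disjoint ball extension condition (Definition~\ref{de:disjoint ball}) is precisely what lets the Duplicator answer a move made ``far away'' from the earlier play, and Lemma~\ref{le:isometries} is what lets her re-use her earlier answers after the radius of control has shrunk. So I would fix $r$, choose radii $m_0>m_1>\dots>m_r\ge 1$ that shrink fast enough that $m_k\ge 3m_{k+1}+2$ (for example $m_k=2\cdot 3^{\,r-k}-1$), and have the Duplicator maintain the following invariant after round $k$: writing $A_k=\{a_1,\dots,a_k\}\subseteq X$ and $B_k=\{b_1,\dots,b_k\}\subseteq X'$ for the points chosen so far, there is an isomorphism $\alpha_k\colon B_{m_k}(A_k)\to B_{m_k}(B_k)$ with $\alpha_k(a_i)=b_i$ and $\alpha_k(A_k)=B_k$. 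The base case $k=0$ is the empty isomorphism.

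For the inductive step, suppose the Spoiler plays $a_{k+1}$ in $X$; the case of a move in $X'$ is symmetric because the condition of Definition~\ref{de:disjoint ball} is symmetric. I split on $d(a_{k+1},A_k)$. In the near case $d(a_{k+1},A_k)\le 2m_{k+1}+1$, a triangle-inequality estimate gives $B_{m_{k+1}}(A_{k+1})\subseteq B_{m_k-1}(A_k)$, so the Duplicator answers $b_{k+1}=\alpha_k(a_{k+1})$ and Lemma~\ref{le:isometries}, applied with $Y=A_k$ and $Z=A_{k+1}$, shows that $\alpha_k$ already restricts to the required isomorphism $B_{m_{k+1}}(A_{k+1})\to B_{m_{k+1}}(B_{k+1})$. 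In the far case $d(a_{k+1},A_k)\ge 2m_{k+1}+2$, the same kind of estimate shows that $B_{m_{k+1}}(a_{k+1})$ is disjoint (no shared vertices and no connecting edges) from the substructure $F=B_{m_{k+1}}(A_k)$, which is finite because $X$ is locally finite. Lemma~\ref{le:isometries} identifies $F$ with $F'=\alpha_k(F)=B_{m_{k+1}}(B_k)$, so the disjoint ball extension condition furnishes $b_{k+1}\in X'$ with $B_{m_{k+1}}(b_{k+1})$ disjoint from $F'$ and isomorphic to $B_{m_{k+1}}(a_{k+1})$ by a map sending $a_{k+1}$ to $b_{k+1}$. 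Gluing that isomorphism to $\alpha_k|_F$ across the disjoint unions $B_{m_{k+1}}(A_{k+1})=F\sqcup B_{m_{k+1}}(a_{k+1})$ and $B_{m_{k+1}}(B_{k+1})=F'\sqcup B_{m_{k+1}}(b_{k+1})$ produces $\alpha_{k+1}$.

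After $r$ rounds the surviving isomorphism $B_{m_r}(A_r)\to B_{m_r}(B_r)$ restricts to an isomorphism of the substructures induced on $A_r$ and $B_r$, so $a_i\mapsto b_i$ is a partial isomorphism and the Duplicator wins. The delicate point, and the one I would check most carefully, is the calibration of the radii against the case threshold: the near case must keep the enlarged ball inside $B_{m_k-1}(A_k)$ so that Lemma~\ref{le:isometries} applies, the far case must deliver genuine disjointness in the sense of Definition~\ref{de:disjoint ball} (edges, not merely vertices), and the two distance ranges must meet so as to leave no gap. The factor-$3$ shrinkage together with the additive slack of $2$ is what simultaneously meets all three requirements; the underlying edge-and-distance bookkeeping is exactly what Lemma~\ref{le:isomorphisms} was set up to handle.
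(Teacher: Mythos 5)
Your proposal is correct and follows essentially the same route as the paper's proof: an Ehrenfeucht--Fra\"{\i}ss\'e game in which the Duplicator maintains an isomorphism between shrinking balls around the chosen points, splitting each round into a near case handled by restricting the current isomorphism via Lemma~\ref{le:isometries} and a far case handled by the disjoint ball extension condition. The only differences are cosmetic calibrations --- the paper uses radii $5^{n-i}$ and splits cases on ball containment rather than a distance threshold, while you use $m_k=2\cdot 3^{r-k}-1$ and fold the first move into the inductive step via the empty isomorphism --- and your bookkeeping (including the edge-disjointness check via distance at least $2$) is sound.
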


\begin{proof} 
Let $X$ and $X'$ be the two structures. We show that for each $n$ the duplicator can win the $n$-step Ehrenfeucht game by constructing isomorphisms $\alpha_i$ from a substructure $F_i\subseteq X$ to a substructure $F_i'\subseteq X'$, where $F_i$ and $F_i'$ consist of the elements chosen by the spoiler and the duplicator in the first $i$ steps. Each $\alpha_i$ will be the restriction of an isomorphism, also called $\alpha_i$, from $B_{5^{n-i}}(F_i)$ to $B_{5^{n-i}}(F_i')$.

We argue by induction on $i$. Suppose $i=1$. By symmetry we may suppose that the spoiler picks $ x\in X$. By hypothesis there is an isomorphism $\alpha_1:B_{5^{n-1}}(x)\to B_{5^{n-1}}(x') \subseteq X'$ with $x'=\alpha_1(x)$. The duplicator chooses $x'$.

Assume $\alpha_i:B_{5^{n-i}}(F_i) \to B_{5^{n-i}}(F_i')$ is an isomorphism for some $i<n$. Again by symmetry the spoiler picks $x\in X$. We have $F_{i+1}=F_i\cup\{x\}$. If $B_{5^{n-i-1}}(x)\subseteq B_{5^{n-i}-1}(F_i)$, then we take $\alpha_{i+1}$ to be the restriction of $\alpha_i$ to $B_{5^{n-i-1}}(F_{i+1})$ and set  $x' = \alpha_i(x)$, $F_{i+1}' = F_i'\cup\{x'\}$. By Lemma~\ref{le:isometries}, $\alpha_{i+1}$ maps $B_{5^{n-i-1}}(F_{i+1})$ onto $B_{5^{n-i-1}}(F_{i+1}')$.

Otherwise $B_{5^{n-i-1}}(x)$ is not a subset of $B_{5^{n-i}-1}(F_i)$. Some $y\in B_{5^{n-i-1}}(x)$ must be a distance at least $5^{n-i}$ from $F_i$. Thus the distance of every vertex $z\in B_{5^{n-i-1}}(x)$ from $F_i$ is at least $5^{n-i} - d(y, z) \ge 5^{n-i} - 2(5^{n-i-1}) \ge 3(5^{n-i-1})$ from $F_i$. It follows that $B_{5^{n-i-1}}(x)$ and $B_{5^{n-i-1}}(F_i)$ are a distance at least $3(5^{n-i-1}) - 5^{n-i-1}\ge 2(5^{n-i-1})\ge 2(5^0)=2$. Thus $B_{5^{n-i-1}}(x)$ and $B_{5^{n-i-1}}(F_i)$ are disjoint.

By hypothesis there is an isomorphism $\beta:B_{5^{n-i-1}}(x)\to B_{5^{n-i-1}}(x')$ with $\beta(x)=x'$ and $B_{5^{n-i-1}}(x')$ disjoint from $\alpha_i(B_{5^{n-i-1}}(F_i))$. Combining the restriction of $\alpha_i$ to $B_{5^{n-i-1}}(F_i)$ with $\beta$, we obtain $\alpha_{i+1}$.
\end{proof}

\section{The Almost Sure Theory}\label{se:theory}

Fix an infinite connected structure $X$ of bounded degree satisfying the duplicate substructure property. Let $\mathcal C$ be the collection of all structures isomorphic to finite substructures of $X$. By construction $\mathcal C$ is closed under passage to substructures. By the duplicate substructure property of $X$, $\mathcal C$ is closed under disjoint union. 

Let $\mathcal A$ be a set of representatives for the isomorphism classes of all finite structures, and define sentences $\sigma_F$, $F\in \mathcal A$, as follows. For $F\in \mathcal A \cap \mathcal C$, $\sigma_F$ says that there there is a closed substructure isomorphic to $F$; for $F\in \mathcal A - \mathcal C$, $\sigma_F$ says that there is no substructure isomorphic to $F$. Define $T$ to be the theory with axioms $\{\sigma_F\}$. 

Observe that the disjoint union of $\{F \mid F\in \mathcal A \cap \mathcal C \}$ is a model of $T$. 

\begin{lemma}\label{le:many} The following conditions hold for any model $Y$ of $T$.
\begin{enumerate}
\item\label{it:many1} Every finite substructure of $Y$ is isomorphic to a closed substructure;
\item\label{it:many2} For any two finite substructures, there is a finite substructure isomorphic to their disjoint union. 
\item\label{it:many3} The union of all finite closed substructures of $Y$ is a model of $T$ and consists of infinitely many disjoint copies of each finite substructure of $X$.
\end{enumerate}
\end{lemma}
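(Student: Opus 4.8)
The plan is to read off all three parts directly from the axioms $\sigma_F$, using only that $Y\models\sigma_F$ for every $F\in\mathcal{A}$ together with the fact that $\mathcal{C}$ is closed under substructures and disjoint unions. For part (\ref{it:many1}) I would start from an arbitrary finite substructure $F'$ of $Y$ and let $F\in\mathcal{A}$ be its isomorphism type. If $F\in\mathcal{A}-\mathcal{C}$, then $\sigma_F$ asserts that $Y$ has no substructure isomorphic to $F$, contradicting the existence of $F'$; hence $F\in\mathcal{A}\cap\mathcal{C}$. For such $F$ the axiom $\sigma_F$ provides a closed substructure of $Y$ isomorphic to $F$, and therefore to $F'$.

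Part (\ref{it:many2}) then follows immediately. Given finite substructures $F_1',F_2'$ of $Y$, part (\ref{it:many1}) puts their isomorphism types $F_1,F_2$ in $\mathcal{C}$, closure of $\mathcal{C}$ under disjoint union gives $F_1\sqcup F_2\in\mathcal{C}$, and $\sigma_{F_1\sqcup F_2}$ yields a closed substructure isomorphic to $F_1'\sqcup F_2'$, which is finite because it is isomorphic to a finite structure.

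Part (\ref{it:many3}) is where the real work lies, and I would split it into three observations. Let $Z$ be the union of all finite closed substructures of $Y$. First, since a finite closed substructure is exactly a union of finitely many finite components of $Y$, the set $Z$ is the union of all finite components of $Y$; because $Z$ is itself closed, its Gaifman graph is the disjoint union of those of the finite components, so the components of $Z$ are precisely the finite components of $Y$. Second, to see that each finite substructure of $X$ occurs infinitely often, I would first take $F$ connected and in $\mathcal{C}$ and apply $\sigma_{kF}$ (legitimate because $\mathcal{C}$ is closed under disjoint union) for every $k$: the resulting closed substructure of $Y$ is isomorphic to $k$ disjoint copies of the connected structure $F$, so it consists of exactly $k$ components of $Y$ isomorphic to $F$. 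Letting $k\to\infty$ shows that $Y$, and hence $Z$, has infinitely many components isomorphic to $F$. A general finite substructure of $X$ is a disjoint union of connected ones, so infinitely many pairwise disjoint copies of it can be assembled from distinct components of $Z$, each type being available infinitely often.

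Finally I would verify $Z\models T$. For $F\in\mathcal{A}\cap\mathcal{C}$, the closed substructure of $Y$ isomorphic to $F$ supplied by $\sigma_F$ is finite, hence a union of components of $Z$, so it witnesses $\sigma_F$ inside $Z$; for $F\in\mathcal{A}-\mathcal{C}$, no substructure of $Y$, and a fortiori none of $Z\subseteq Y$, is isomorphic to $F$. I expect the main obstacle to be the bookkeeping in part (\ref{it:many3}): one must check that ``closed'' is interpreted consistently when passing from $Y$ to $Z$, namely that a finite closed substructure of $Y$ remains closed in $Z$ precisely because its components are finite components of $Y$, and that the finite witnesses produced for each $k$ genuinely assemble into a single infinite pairwise-disjoint family. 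Reducing to connected $F$ and counting components is the cleanest way to secure both points.
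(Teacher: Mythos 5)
Your proposal is correct and follows essentially the same route as the paper, which disposes of the lemma in three sentences: parts (\ref{it:many1}) and the first half of (\ref{it:many3}) ``by construction of $T$,'' part (\ref{it:many2}) from closure of $\mathcal C$ under disjoint union, and the last half of (\ref{it:many3}) from (\ref{it:many1}) and (\ref{it:many2}). Your only substantive addition is to make explicit the step the paper leaves implicit --- passing from ``$k$ disjoint copies for every $k$'' to ``infinitely many disjoint copies'' by reducing to connected $F$ and counting components of $Y$ isomorphic to $F$ --- which is a correct and careful filling-in, not a different argument.
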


\begin{proof} Item~(\ref{it:many1}) and the first part of~(\ref{it:many3}) hold by construction of $T$.
For~(\ref{it:many2}) observe that as $\mathcal C$ is closed under disjoint union, for any $F_1, F_2\in \mathcal A\cap \mathcal C$ there is an $F_3 \in \mathcal A\cap \mathcal C$ isomorphic to the disjoint union of $F_1$ and $F_2$. Finally the last part of~(\ref{it:many3}) follows from~(\ref{it:many1}) and~(\ref{it:many2}).
\end{proof}

\begin{lemma}\label{le:complete} $T$ is complete.
\end{lemma}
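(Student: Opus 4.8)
The plan is to deduce completeness of $T$ from Theorem~\ref{th:equivalence}. A consistent theory is complete exactly when all of its models are elementarily equivalent, and $T$ is consistent because the disjoint union of the representatives in $\A\cap\C$ is a model. Hence it suffices to show that any two models $Y$ and $Y'$ of $T$ are elementarily equivalent, and for this I will verify that they meet the two hypotheses of Theorem~\ref{th:equivalence}: local finiteness and the disjoint ball extension condition.

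Local finiteness --- in fact bounded degree --- is forced by the axioms $\sigma_F$ with $F\in\A-\C$. Every structure in $\C$ is an induced substructure of $X$, so its Gaifman graph is a subgraph of $[X]$ restricted to its support and therefore inherits the degree bound $d$ of $X$. If some vertex $v$ of a model $Y$ had more than $d$ neighbours in $[Y]$, the substructure induced on $v$ together with $d+1$ of its neighbours would be a finite structure of degree exceeding $d$, hence outside $\C$, and the corresponding axiom $\sigma_F$ would forbid it; this contradiction shows every model of $T$ has degree at most $d$. The same family of axioms shows that every finite substructure of $Y$ lies in $\C$: its isomorphism type is represented by some $F\in\A$, and if $F\in\A-\C$ then $\sigma_F$ would prohibit it.

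The substance of the proof is the disjoint ball extension condition (Definition~\ref{de:disjoint ball}). Suppose $Y$ contains $B_n(x)$ disjoint from a finite substructure, and $Y'$ contains $F'$ isomorphic to that substructure; I must produce in $Y'$ a ball $B_n(y)$ disjoint from $F'$ and isomorphic to $B_n(x)$ with $x\mapsto y$. Set $Z=B_n(x)$, a finite, connected substructure of $Y$, which by the previous paragraph lies in $\C$. By Lemma~\ref{le:many}(\ref{it:many3}), $Y'$ contains infinitely many pairwise disjoint finite closed substructures isomorphic to $Z$; since $F'$ is finite and therefore meets only finitely many components, I may choose one such copy $W$ that is disjoint from $F'$, with isomorphism $\psi\colon Z\to W$, and put $y=\psi(x)$.

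The delicate point --- and the one I expect to be the main obstacle --- is that $W$ is only known to be an isomorphic substructure, whereas the condition demands an honest ball $B_n(y)$ of the ambient structure $Y'$. This is where closedness is essential. Because every point of $Z=B_n(x)$ lies within distance $n$ of $x$ as computed inside $Z$ (a geodesic of $Y$ from $x$ to a point of $B_n(x)$ has all of its vertices within distance $n$ of $x$, hence inside $Z$, so distances to $x$ are the same in $Z$ as in $Y$), the ball of radius $n$ about $x$ taken inside $Z$ is all of $Z$. Since $W$ is closed, it is a union of components of $Y'$, so distances within $W$ agree with distances in $Y'$ and no point outside $W$ is reachable from $y$; consequently $B_n(y)$ computed in $Y'$ equals $W=\psi(Z)=\psi(B_n(x))$. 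Thus $\psi$ itself is an isomorphism of $B_n(x)$ onto $B_n(y)$ carrying $x$ to $y$, and $B_n(y)$ is disjoint from $F'$ because $W$ is. This verifies the disjoint ball extension condition, so Theorem~\ref{th:equivalence} gives $Y\equiv Y'$, and $T$ is complete.
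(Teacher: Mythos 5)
Your proof is correct and follows essentially the same route as the paper's: reduce completeness to elementary equivalence of models, deduce bounded degree from the axioms $\sigma_F$, and verify the disjoint ball extension condition by using Lemma~\ref{le:many} to find a finite \emph{closed} copy of $B_n(x)$ in $Y'$ disjoint from $F'$, with closedness forcing that copy to be a genuine ball $B_n(y)$ (the paper invokes Lemma~\ref{le:isomorphisms} for the inclusion $Z'\subseteq B_n(y')$ where you re-derive the same fact by a direct geodesic argument). One small repair: in your degree-bound paragraph, the substructure induced on $v$ together with $d+1$ of its neighbours need not retain the relevant Gaifman edges, since the tuples witnessing those edges may involve elements outside the chosen set; you should enlarge the finite substructure to include the witnessing tuples (still finite, as the vocabulary has bounded arity) before invoking the axiom $\sigma_F$.
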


\begin{proof} 
It suffices to show that any two models of $T$ are elementarily
equivalent. Up to isomorphism the finite substructures of any model of $T$
are the same as those of $X$. Thus models of $T$ have bounded degree. By
Theorem~\ref{th:equivalence} it suffices to show that any two models $Y$, $Y'$ of
$T$ satisfy the disjoint ball extension condition.

Suppose that $F$ is a finite substructure of $Y$ and $B_n(y)\subseteq Y$ is
disjoint from $F$, and $F$ is isomorphic to $F'\subseteq Y'$. $B_n(y)$ is a
finite substructure of $Y$ and hence isomorphic to a finite closed
substructure $Z'\subseteq Y'$. By Lemma~\ref{le:many} we may assume $Z'$ is
disjoint from $F'$. Let $y'$ be the image of $y$ under this isomorphism
mapping $B_n(y)$ to $Z'$. By Lemma~\ref{le:isomorphisms}, $Z'\subseteq
B_n(y')$. As $Z'$ is closed, it follows that $Z' = B_n(y')$.
\end{proof}

\begin{lemma}\label{le:sigma-i-generic}
  Each axiom $\sigma_F$ is almost surely true for finite substructures of $X$.
\end{lemma}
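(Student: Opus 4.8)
The plan is to treat two cases according to whether $F\in\mathcal C$. If $F\notin\mathcal C$, then no substructure of $X$ is isomorphic to $F$, so in particular no substructure of any $B_n(x)$ has a substructure isomorphic to $F$; hence $\sigma_F$ holds in \emph{every} substructure of $B_n(x)$ and the fraction is identically $1$. So assume $F\in\mathcal C$, fix $x$, and recall that a substructure of $B_n(x)$ is just an induced substructure on a subset $S$ of the vertex set of $B_n(x)$; the uniform distribution on these $2^{|B_n(x)|}$ substructures is exactly the distribution in which each vertex of $B_n(x)$ is placed in $S$ independently with probability $1/2$. I will exhibit, inside $B_n(x)$ for large $n$, a growing number of disjoint local witnesses for $\sigma_F$, each of which becomes a closed copy of $F$ in $S$ with a fixed positive probability, independently of the others.

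First I would prove an auxiliary separation statement: \emph{for every $k$, $X$ contains $k$ copies $F_1,\dots,F_k$ of $F$ that are pairwise at distance at least $3$.} I argue by induction on $k$, the base case being that $F\in\mathcal C$ provides one copy. Given $F_1,\dots,F_m$ already separated, set $W=B_2(F_1\cup\cdots\cup F_m)$, a finite substructure by bounded degree. The duplicate substructure property (Definition~\ref{de:duplicate substructure}) yields a substructure $W'$ isomorphic to $W$ and disjoint from $W$; since $W$ contains a copy of $F$, so does $W'$, say $F_{m+1}$. Because $W'$ is disjoint from $W\supseteq B_2(F_j)$, the set $F_{m+1}$ meets no $B_2(F_j)$, so $d(F_{m+1},F_j)\ge 3$ for every $j\le m$. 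This is the step I expect to be the crux: the duplicate substructure property only guarantees copies that are \emph{disjoint} (distance $\ge 2$), and the trick that upgrades this to distance $\ge 3$ --- which is exactly what the independence below requires --- is to duplicate the $2$-neighborhood $W$ rather than $F$ itself.

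To finish, note that the finitely many copies $F_1,\dots,F_k$ lie in some ball $B_{N-1}(x)$, since $X$ is connected; thus for all $n\ge N$ they lie in $B_{n-1}(x)$ and their neighborhoods lie in $B_n(x)$. Let $\partial F_j$ denote the neighbors of $F_j$ outside $F_j$. The event $A_j$ that $F_j\subseteq S$ and $\partial F_j\cap S=\emptyset$ forces $F_j$ to be a union of components of $S$, i.e.\ a closed substructure of $S$, isomorphic to $F$; hence $A_j$ implies $S\models\sigma_F$. Distance $\ge 3$ makes the gadgets $F_j\cup\partial F_j$ pairwise disjoint, so the events $A_j$ are independent, and if $D$ bounds the degree of $X$ then each has probability $2^{-(|F|+|\partial F_j|)}\ge 2^{-|F|(1+D)}=:p>0$. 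Consequently the fraction of substructures of $B_n(x)$ failing $\sigma_F$ is at most the probability that all $A_j$ fail, namely $(1-p)^{k}$. Given $\varepsilon>0$ I choose $k$ with $(1-p)^k<\varepsilon$ and then $N$ as above; for all $n\ge N$ the failing fraction is below $\varepsilon$. Hence the fraction of substructures satisfying $\sigma_F$ tends to $1$ for every $x$, which is the assertion of Definition~\ref{de:almosttrue}.
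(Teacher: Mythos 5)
Your proof is correct, and it shares the paper's overall skeleton --- exhibit, inside $B_n(x)$ for all large $n$, many vertex-disjoint local gadgets, each of which becomes a closed copy of $F$ in the random substructure with a fixed positive probability $p$, so that the failing fraction is at most $(1-p)^k\to 0$ --- but you handle the crux by a genuinely different device. The paper never separates copies of $F$ by distance $3$; instead it chooses a copy $F_1$ for which $G_1=B_1(F_1)$ has \emph{maximum} possible size $k$ (possible by bounded degree) and then invokes closure of $\mathcal C$ under disjoint union to obtain infinitely many pairwise disjoint copies $G_i$ of $G_1$. The maximality, via Lemma~\ref{le:isomorphisms}(\ref{it:inclusion}), forces each copy $G_i$ to be a full $1$-ball $B_1(F_i)$ in $X$, so the witness event is ``$S\cap G_i=F_i$'' (probability exactly $2^{-k}$), and ordinary disjointness of the $G_i$ (distance $\ge 2$) already yields independence, because each $F_i$'s entire boundary is trapped inside its own $G_i$. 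You instead take boundaries $\partial F_j$ in $X$ directly; since those can overlap between copies at distance exactly $2$, you rightly identified that distance $\ge 3$ is what independence requires, and your upgrade --- duplicating the $2$-neighborhood $W=B_2(F_1\cup\dots\cup F_m)$ rather than $F$ itself, so that the new copy lands outside every $B_2(F_j)$ --- is sound. The trade-off: the paper's maximality trick needs no induction, uses one application of closure under disjoint union, and gives the sharper witness probability $2^{-k}$; your argument dispenses with the maximality selection and with any appeal to Lemma~\ref{le:isomorphisms}, at the cost of an inductive construction and the cruder but equally serviceable uniform bound $2^{-|F|(1+D)}$. Both proofs treat the uniform distribution on substructures of $B_n(x)$ as independent Bernoulli choices per vertex, and both use connectedness of $X$ only to place the finitely many gadgets (with their boundaries) inside $B_n(x)$ for $n$ large.
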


\begin{proof} 
If $\sigma_F$ says there is no substructure isomorphic to $F$, then $F$ is not isomorphic to any substructure of $X$. Hence $\sigma_F$ holds for all substructures of every ball in $X$.  In the remaining case $\sigma_F$ says that there is a closed substructure isomorphic to $F$. It follows that $F$ is isomorphic to a substructure $F_1$ of $X$.

Choose $F_1$ such that $G_1=B_1(F_1)$ has maximum possible size, $k$. This is possible because the vertex degree of $[X]$ is bounded. $G_1$ has $2^k$ subsets, one of which supports $F_1$. Further our choice of $F_1$ guarantees that if $G'$ is any substructure isomorphic to $G_1$, then $G'=B_1(F')$ for some substructure $F'$ isomorphic to $F$. By Lemma~\ref{le:many} there are denumerably many substructures $G_2, G_3, \ldots$ isomorphic to $G_1$ and disjoint from $G_1$ and each other.  Each $G_i$ is $B_1(F_i)$ for a substructure $F_i$ of $G_i$ isomorphic to $F$.

Consider balls $B_n(x)$ for some $x$. It follows from the connectedness of $X$ that for any $m$, $B=B_n(x)$ will contain at least $m$ of the $G_i$'s if $n$ is large enough. For each $G_i\subseteq B$, the fraction of substructures of $B$ whose restriction to that $G_i$ is not $F_i$ is at most $1-2^{-k}$.  Thus the fraction whose restriction to some $G_i$ in $B_n(x)$ equals $F_i$ is at least $1- (1-2^{-k})^m$, which is arbitrarily small when $m$ is large enough and hence when $n$ is large enough. Further when the restriction of a substructure of $B$ to $G_i$ is $F_i$, then because the substructure does not contain any points of $B_1(F_i)-F_i$, $F_i$ is closed in the substructure.
\end{proof}

Now we complete the proof of Theorem~\ref{th:main}. Let $\sigma$ be
an arbitrary first-order sentence in the language of graph theory.
Since $T$ is complete it follows that either $\sigma$ or $ \neg
\sigma$ is derivable from a finite set of axioms of $T$. Clearly the
conjunction of this finite set of almost surely true sentences is almost
true for finite substructures of $X$. It follows that $\sigma$ or
$\neg \sigma$, whichever one is derivable from $T$, is almost surely true
for finite substructures of $X$. The proof of Theorem~\ref{th:main} is
complete.

\section{Decidability}\label{se:decidability}

In this and subsequent sections we develop our theme further. From now on $X$ is any structure satisfying the hypotheses of Theorem~\ref{th:main} and $T$ is the almost sure theory for finite substructures of $X$. 

\begin{definition}\label{de:locally computable} 
$X$ is locally computable if for every natural number $n$ one can effectively find a set of representatives of the isomorphism classes of balls of radius $n$. 
\end{definition}

Notice that by hypothesis $X$ is of bounded degree. Thus for any $n$ there are up to isomorphism only a finite number of balls of radius $n$.

\begin{lemma}\label{le:decidable}
 $T$ is decidable if and only if $X$ is locally computable.
\end{lemma}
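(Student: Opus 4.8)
The plan is to prove both implications by reducing local computability and decidability of $T$ to one common effectiveness condition: the solvability of the membership problem for the class $\C$ of finite substructures of $X$. Two observations drive this. First, by Lemma~\ref{le:complete} the theory $T$ is complete, and a complete theory is decidable exactly when it admits a recursively enumerable set of axioms; since the axioms $\sigma_F$ of $T$ are prescribed entirely by whether $F\in\C$, decidability of $T$ is equivalent to being able to write the $\sigma_F$ effectively, i.e.\ to decide membership in $\C$. Second, for each fixed $n$ the property ``$y$ is the centre of a ball of radius $n$ isomorphic to a given finite rooted structure $G$'' is first-order expressible by a formula $\gamma_G(y)$: bounded distance $d(y,z)\le n$ is definable for fixed $n$, so one can assert that the set of points within distance $n$ of $y$ is exactly $\{z_1,\dots,z_m\}$ and that its induced diagram is that of $G$. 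Hence the existence of a radius-$n$ ball of type $G$ is a first-order sentence $\beta_G$, which a decision procedure for $T$ can evaluate.

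For the direction \textbf{local computability $\Rightarrow$ $T$ decidable} I would first decide $\C$. A connected $F$ on $k$ vertices has radius at most $k-1$, so $F\in\C$ iff $F$ embeds into some ball of radius $k-1$; by bounded degree there are, up to isomorphism, only finitely many such balls (the remark following Definition~\ref{de:locally computable}), local computability produces representatives for them, and testing embeddability into each is a finite search. An arbitrary $F$ lies in $\C$ iff every connected component of $F$ does, since $\C$ is closed under substructures and, by the duplicate substructure property (Definition~\ref{de:duplicate substructure}), under disjoint unions. With $\C$ decidable the axiom set $\{\sigma_F\}$ is recursive, and completeness then yields decidability of $T$.

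For \textbf{$T$ decidable $\Rightarrow$ local computability} I would run the decision procedure on the sentences $\beta_G$: by bounded degree only finitely many rooted connected types $G$ of radius at most $n$ have degrees bounded by that of $X$, these can be listed, and for each one $T$ decides $\beta_G$, so a finite collection of ball types is determined effectively. The step I expect to be the \emph{main obstacle} is verifying that $\beta_G$ captures precisely the balls of Definition~\ref{de:locally computable}. Passage to $T$ replaces $X$ by its models, and by completeness $T\vdash\beta_G$ iff $G$ occurs as a ball in the canonical model of Lemma~\ref{le:many}, that is, as a ball inside some finite substructure of $X$. One inclusion is straightforward: every ball $B_n(x)$ of $X$ is already a full ball inside the finite substructure $B_n(x)\in\C$, since a shortest path witnessing $d(x,z)\le n$ stays inside $B_n(x)$, so $B_n^{B_n(x)}(x)=B_n(x)$; thus every genuine ball type is among those recognized by $T$. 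The delicate point is the reverse comparison — confirming that the theory-level notion of a radius-$n$ ball does not admit spurious types — because membership in $\C$ by itself forgets how a substructure is situated within $X$. I would resolve it by carrying out the identification at the level of the class $\C$ itself, matching each type $\beta_G$ singles out against the radius-$n$ balls arising in members of $\C$, which is exactly the information local computability is required to furnish.
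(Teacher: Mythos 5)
Your forward direction is correct and is essentially the paper's own argument: reduce decidability of $T$ to computability of the axiom set, i.e.\ to deciding membership in $\C$; test a connected $F$ against the finitely many isomorphism types of balls of radius bounded by the size of $F$, which local computability supplies; reduce arbitrary $F$ to its components using closure of $\C$ under substructures and, via the duplicate substructure property, under disjoint unions; and conclude from completeness together with a recursive (indeed r.e.) axiomatization that $T$ is decidable.

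The converse, however, contains a genuine gap, and it is exactly the one you flagged --- but your proposed resolution is circular rather than a repair. As you note, since $T$ is complete and is modeled by the disjoint union of countably many copies of each member of $\C$ (Lemma~\ref{le:many}), $T\vdash\beta_G$ if and only if $G$ occurs as a radius-$n$ ball \emph{computed inside some finite substructure} $Y$ of $X$, and this set of pointed types is in general strictly larger than the set of genuine ball types $B_n(x)$ of $X$. A concrete witness: let $X$ be the two-sided infinite directed path (the Cayley diagram of the infinite cyclic group) and $X'$ the one-sided infinite directed path. Both satisfy the hypotheses of Theorem~\ref{th:main}, and both have the same class $\C$ (finite disjoint unions of directed paths), hence \emph{literally the same theory} $T$; yet $X'$ has a radius-$1$ ball type (the two-element path at its endpoint) that $X$ lacks. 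So $T\vdash\beta_G$ for that $G$ even though $G$ is not a ball of $X$: the test overgenerates, and more fundamentally $T$ alone does not determine the ball types of $X$, so no filtering of the types that $\beta_G$ singles out can be carried out "at the level of the class $\C$ itself" --- identifying which relative ball types are genuine balls of $X$ is precisely the information Definition~\ref{de:locally computable} asks you to produce, so invoking it assumes the conclusion. (You may take some comfort in the fact that the paper's own proof of this direction is equally terse: it simply asserts that deciding the sentence $\beta_F$ "decides whether or not $F$ is isomorphic to a ball of radius $n$ in $X$", which fails for the same reason. You have correctly located a lacuna in the published argument; closing it requires input about $X$ beyond a decision procedure for $T$, or a reinterpretation of "ball" as a ball relative to a member of $\C$.)
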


\begin{proof}
Assume $X$ is locally computable. To prove that $T$ is decidable, it suffices to show that the axioms for $T$ are computable. Indeed if the axioms are computable, then $T$ is recursively enumerable; and because $T$ is complete, enumeration of $T$ produces either $\sigma$ or $\neg\sigma$ for every sentence $\sigma$. Thus $T$ is decidable.

The axioms of $T$ are computable if we can decide for any finite structure $F$ whether or not $F$ is isomorphic to a substructure of $X$. If $[F]$ is connected, then any isomorphic substructure $F_1$ of $X$ must lie in some ball of radius at most equal to the size of $F$. By hypothesis we can examine the finitely many representatives of the isomorphism classes of these balls to check if $F$ is isomorphic to a substructure of $X$.

In general we can check in the same way if the substructures $C$ of $F$ corresponding to the connected components of $[F]$ are isomorphic to substructures of $X$. If some $C$ fails the test, then $F$ cannot be a substructure of $X$. If they all pass, then by the duplicate substructure property we can embed them into $X$ is such a way that elements of distinct $C$'s are a distance at least $2$ from each other. It follows that the union of the $C$'s is isomorphic to $F$.

To prove the converse suppose that $T$ is decidable. For any finite structure $F$ one can write down a formula which says that there is an element $u$ for which the ball of radius $n$ around $u$ is isomorphic to $F$. Hence one can decide whether or not $F$ is isomorphic to a ball of radius $n$ in $X$. As $X$ has bounded degree, only finitely many $F$'s have to be checked in order to generate a complete list of isomorphism types of balls of radius $n$ in $X$.
\end{proof}

\begin{corollary} If $X$ is the Cayley diagram of a finitely generated group $G$, then $T$ is decidable if and only if $X$ has solvable word problem
\end{corollary}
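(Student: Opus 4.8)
The plan is to reduce everything to Lemma~\ref{le:decidable}, which says that $T$ is decidable if and only if $X$ is locally computable. Thus it suffices to prove that the Cayley diagram $X$ of $G$, taken with respect to the fixed finite generating set $S$ supplied by $\U$, is locally computable if and only if $G$ has solvable word problem. The crucial structural observation is that $X$ is vertex-transitive: for each $h\in G$ the left translation $g\mapsto hg$ preserves every relation, since $R_i(g,gs_i)$ becomes $R_i(hg,(hg)s_i)$, so it is an automorphism of $X$. Consequently $g\mapsto g^{-1}x$ carries $B_n(x)$ isomorphically onto $B_n(e)$, and \emph{every} ball of radius $n$ is isomorphic to the single ball $B_n(e)$ around the identity. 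Hence local computability amounts precisely to the ability, for each $n$, to compute the finite structure $B_n(e)$ effectively.

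First I would treat the direction in which $G$ has solvable word problem. In the Gaifman graph $[X]$ two elements are adjacent exactly when they differ by right multiplication by some $s_i^{\pm1}$, so $d(e,g)$ is the word length of $g$ over $S\cup S^{-1}$ and the vertices of $B_n(e)$ are exactly the elements represented by words of length at most $n$. I would enumerate the finitely many such words; using the word problem I can decide, for any two words $u,v$, whether they represent the same element, namely by testing whether $uv^{-1}$ represents the identity, and this recovers the vertex set of $B_n(e)$ as a set of representative words. For the relations, $R_i(u,v)$ holds in $B_n(e)$ exactly when $us_i$ and $v$ represent the same element, which is again a word-problem query. This produces $B_n(e)$ effectively, so $X$ is locally computable.

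The reverse direction is where the main obstacle lies. Local computability hands us $B_n(e)$ only as an \emph{abstract, unpointed} structure, so we cannot simply read off the identity vertex and trace a word from it. I would circumvent this as follows. Given a word $w=a_1\cdots a_m$ over $S\cup S^{-1}$, compute the structure $B=B_m(e)$. In $B$ each relation $R_i$ is the graph of a partial function, since at most one forward and one backward $R_i$-neighbour of any vertex survives in the induced substructure, so ``following $w$ from a vertex $v$'' is a well-defined walk that is defined as long as each required neighbour lies in $B$. Let $C$ be the set of $v\in B$ for which this walk stays inside $B$ and returns to $v$. A walk from $v$ following $w$ ends at the element $vw$, so $C$ is empty when $w$ does not represent the identity; whereas when $w$ does represent the identity the true centre $e$ belongs to $C$, because its walk stays within distance $m$ of $e$ and hence inside $B_m(e)=B$, and it returns to $e$. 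Therefore $w$ represents the identity if and only if $C\neq\emptyset$, a condition I can check by inspecting the finitely many vertices of the computed structure $B$. Combining the two directions with Lemma~\ref{le:decidable} yields the corollary; the only delicate point, resolved by the set $C$, is that the word problem must be recovered without a distinguished basepoint, using that a trivial word fixes every admissible basepoint while a nontrivial word fixes none.
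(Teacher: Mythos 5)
Your proof is correct, and the forward direction (solvable word problem implies local computability, via vertex-transitivity and explicit construction of $B_n(e)$ from word representatives) is exactly the paper's argument. The reverse direction, however, takes a genuinely different route. The paper does not pass back through local computability at all: for each word $w$ it observes that the relation $R_w(x,y)$, ``there is a path labeled $w$ from $x$ to $y$,'' is first-order definable, so decidability of $T$ lets one decide the sentence $\exists x\, R_w(x,x)$, which holds in models of $T$ exactly when $w$ represents the identity (any such path inside a substructure of $X$ forces $xw=x$, and conversely if $w=1$ in $G$ the ball $B_{|w|}(e)$ witnesses such a path and embeds as a closed substructure in every model of $T$). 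You instead invoke the converse half of Lemma~\ref{le:decidable} to obtain local computability and then recover the word problem from the abstract, unpointed ball; the basepoint difficulty you isolate is real, and your set $C$ resolves it correctly, since the relations $R_i$ are graphs of partial injections, any completed walk from $v$ following $w$ must end at $vw$ (so $C\neq\emptyset$ forces $w=1$ in $G$), while if $w=1$ the centre lies in $C$ because every prefix of $w$ stays within distance $|w|$ of it. Comparing the two: the paper's argument is shorter and exploits definability together with the completeness of $T$; yours is more elementary and combinatorial, avoids reasoning about models of $T$ in the reverse direction, and as a by-product proves the slightly stronger statement that for Cayley diagrams local computability is equivalent to solvability of the word problem.
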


\begin{proof} Reall that there is one binary predicate for each generator of $G$. If the word problem is decidable, one can construct the ball of radius $n$ around the identity. Since all balls of radius $n$ are isomorphic, $X$ is locally computable. Conversely if $X$ is locally computable, $T$ is decidable by Lemma~\ref{le:decidable}. For any word $w$ in the generators of $G$, the binary relation $R_w(x,y)$ which holds when there is a path with label $w$ from $x$ to $y$ in $X$ is definable. Thus we can decide if $\exists x\, R_w(x,x)$ is true, i.e., if $w$ defines the identity in $G$.
\end{proof}

\section{Random Substructures}\label{se:random}

For a fixed $p$, $0 < p < 1$, we may imagine generating a random substructure of $X$ by deleting each element of $X$ with probability $1-p$. The random substructure is the one supported by all the remaining elements. We will show that almost all random substructures are elementarily equivalent but not necessarily isomorphic.

A more precise definition of random substructures of $X$ is obtained by first defining a measure on cones. For each pair, $S,T$, of disjoint finite subsets of elements of $X$, the corresponding cone consists of all subsets of elements which include $S$ and avoid $T$. The measure of this cone is defined to be $p^{\abs{S}}q^{\abs{T}}$, where $\abs S$ and $\abs{T}$ are the cardinalities of $S$ and $T$ respectively, and $q=1-p$. By a well known theorem of Kolmogorov the measure on cones extends uniquely to a probability measure, $\mu$, on the $\sigma$--algebra generated by the cones. 

\begin{lemma}
\label{le:union-delta}
Let $F$ be a finite substructure of $X$. With probability $1$ a random substructure of $X$ contains a closed substructure isomorphic to $F$.
\end{lemma}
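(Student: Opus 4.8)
The plan is to use a Borel--Cantelli type argument built on the infinitely many disjoint copies of $F$ that $X$ is guaranteed to contain. By the duplicate substructure property and the connectedness of $X$ (together with bounded degree, as in the proof of Lemma~\ref{le:sigma-i-generic}), there exist denumerably many pairwise disjoint substructures $G_1, G_2, \ldots$ of $X$, each isomorphic to $G = B_1(F_1)$ where $F_1\cong F$ is chosen so that $B_1(F_1)$ has maximal size $k$. Each $G_i$ equals $B_1(F_i)$ for a substructure $F_i\cong F$, and the key feature is that if the random substructure's restriction to $G_i$ is exactly $F_i$ (i.e., it retains all elements of $F_i$ and discards all elements of $G_i - F_i = B_1(F_i) - F_i$), then $F_i$ is \emph{closed} in the random substructure, since no surviving neighbor can attach to it.

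First I would fix the event $E_i$ that the random substructure restricts on $G_i$ to precisely $F_i$. Because the elements are deleted independently with retention probability $p$, and each $G_i$ has exactly $k$ elements of which $\abs{F_i} = \abs F$ are to be retained and $k - \abs F$ discarded, the measure of $E_i$ is $\mu(E_i) = p^{\abs F} q^{\,k - \abs F}$, a fixed positive constant $c > 0$ independent of $i$; here I am using the cone measure from the definition preceding the lemma. Since the $G_i$ are pairwise disjoint, the events $E_1, E_2, \ldots$ are \emph{mutually independent}.

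Next I would apply the second Borel--Cantelli lemma. Because $\sum_{i=1}^\infty \mu(E_i) = \sum_{i=1}^\infty c = \infty$ and the $E_i$ are independent, with probability $1$ infinitely many $E_i$ occur; in particular at least one occurs. Whenever some $E_i$ occurs, the random substructure contains $F_i$ as a closed substructure isomorphic to $F$, which is exactly the conclusion. Thus $\mu\bigl(\bigcup_i E_i\bigr) = 1$, and the lemma follows.

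The main technical point to get right is not the probability estimate but the geometric claim that the independence of the $E_i$ genuinely translates into the random substructure containing a \emph{closed} copy of $F$. I expect the subtle step to be verifying that conditioning the random substructure to equal $F_i$ on all of $G_i = B_1(F_i)$ forces $F_i$ to be closed: one must check that any edge of $[X]$ from a vertex of $F_i$ to a surviving vertex would place that vertex in $B_1(F_i) = G_i$, hence in $G_i - F_i$, contradicting that those elements were deleted under $E_i$. This is where Lemma~\ref{le:isomorphisms} and the maximality of $\abs{G}$ do the real work, exactly as in the proof of Lemma~\ref{le:sigma-i-generic}; the probabilistic upgrade from ``almost surely true for finite substructures'' to ``probability $1$ under $\mu$'' is then routine.
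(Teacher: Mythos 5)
Your proposal is correct and takes essentially the same route as the paper: the paper likewise fixes $F_1\cong F$ with $B_1(F_1)$ maximal, uses the duplicate substructure property together with Lemma~\ref{le:isomorphisms} and maximality to produce denumerably many pairwise disjoint balls $H_i=B_1(F_i)$, and concludes from the independence of the equal, positive-probability events $Y\cap B_1(F_i)=F_i$ (each forcing $F_i$ to be closed in $Y$) that at least one occurs with probability $1$. Your explicit appeal to the second Borel--Cantelli lemma is merely a packaged version of the paper's direct observation that infinitely many independent events of the same probability less than $1$ almost surely fail to all miss.
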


\begin{proof} The proof is just a modification of the proof of Lemma~\ref{le:sigma-i-generic}. Fix $F$, and pick a substructure $F_1$ of $X$ which is isomorphic to $F$ and for which $B_1(F_1)$ is maximal. By the duplicate substructure property $X$ has denumerably many pairwise disjoint and isomorphic substructures $H_1=B_1(F_1), H_2, H_3, \ldots$. For any $i$ there is an isomorphism $\alpha_i:H_1\to H_i$ carrying $F_1$ to $F_i=\alpha(F_1)$. By Lemma~\ref{le:isomorphisms} $H_i\subseteq B_1(F_i)$. By maximality of $B_1(F_1)$ we have $H_i = B_1(F_i)$. 

Let $Y$ be a random substructure of $X$. If $Y\cup B_1(F_i) = F_i$, then $Y$ contains $F_i$ as a closed substructure. By disjointness the denumerably many events $Y\cap B_1(F_i) \neq F_i$ are independent. As each of these event has the same probability, and that probability is less than $1$, we conclude that the probability of a random graph containing at least one of the $F_i$'s as a closed substructure is $1$. 
\end{proof}

Now define $X^*$ to be the structure consisting of the disjoint union of a denumerable number of copies of each finite substructure of $X$. It is clear that $X^*$ is a model $T$.

\begin{lemma}\label{le:submodel} With probability $1$ a random substructure of $X$ contains a closed substructure isomorphic to $X^*$. 
\end{lemma}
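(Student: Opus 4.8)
The plan is to bootstrap from Lemma~\ref{le:union-delta}, which guarantees a single closed copy of a fixed finite $F$, up to the full infinite structure $X^*$. First I would \emph{strengthen} Lemma~\ref{le:union-delta} to the assertion that, for each fixed finite substructure $F$, with probability $1$ a random substructure $Y$ contains \emph{denumerably many pairwise disjoint} closed copies of $F$. Reusing the setup of that proof, one has pairwise disjoint balls $H_i = B_1(F_i)$ with $F_i \cong F$, and the events $E_i = \{\,Y \cap H_i = F_i\,\}$ are independent (they have disjoint supports) and each has the same positive probability $r = p^{\abs{F}} q^{\abs{H_1}-\abs{F}} < 1$. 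Since $\sum_i \mu(E_i) = \infty$, the second Borel--Cantelli lemma gives $\mu(\limsup_i E_i) = 1$; equivalently, for every $N$ the product $\prod_{i \ge N}(1-r)$ vanishes, so with probability $1$ infinitely many $E_i$ occur. Each such $E_i$ yields a closed copy of $F$ supported on $F_i$ (closed because $Y$ avoids $B_1(F_i) - F_i$), and these copies are pairwise disjoint because the $H_i$ are.

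Next I would reduce the embedding of $X^*$ to a statement about components. Since $X^*$ is a disjoint union of finite connected structures, and every component of $X^*$ is an isomorphic copy of a finite connected substructure of $X$ occurring denumerably often, embedding $X^*$ as a closed substructure of $Y$ amounts to realizing each such connected type denumerably often among the components of $Y$. The key observation is that a closed substructure isomorphic to a \emph{connected} structure $C$ is necessarily a single component of $Y$; hence the strengthened lemma, applied with $F = C$, yields with probability $1$ denumerably many distinct components of $Y$ isomorphic to $C$. The isomorphism types of finite connected substructures of $X$ form a countable set, so intersecting the corresponding probability-$1$ events over all of them still has probability $1$. Thus, with probability $1$, $Y$ has denumerably many components of each finite connected substructure type of $X$.

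Finally I would assemble the copy of $X^*$. Enumerate the components $D_1, D_2, \ldots$ of $X^*$, each a finite connected substructure of $X$. Processing them in order, I would injectively assign to each $D_k$ an as-yet-unused component $E_k$ of $Y$ with $E_k \cong D_k$; this is always possible because $Y$ has infinitely many components of each required type while only finitely many have been consumed by stage $k$. The union $\bigcup_k E_k$ is a union of components of $Y$, hence closed, and is isomorphic to $\bigsqcup_k D_k = X^*$.

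The main obstacle is the passage from ``at least one copy'' to ``infinitely many disjoint copies,'' that is, replacing the ``not all absent'' argument of Lemma~\ref{le:union-delta} with the divergence-of-probabilities input needed for infinitely many independent successes. Once that is in hand, the reduction to components (which makes the disjointness of the assembled pieces automatic) and the countable-intersection bookkeeping are routine.
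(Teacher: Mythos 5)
Your proof is correct, and it is essentially a careful expansion of the paper's own proof, which is only a two-line sketch: the paper simply observes that the duplicate substructure property together with Lemma~\ref{le:union-delta} exhibit the desired event as a countable intersection of measure-one sets. You supply all the details that sketch leaves implicit, with one genuine variation in how you obtain \emph{infinitely many} disjoint closed copies of a fixed finite $F$. The paper's intended route is to use Lemma~\ref{le:union-delta} as a black box: since $\mathcal C$ is closed under disjoint unions (this is where the duplicate substructure property enters), the $n$-fold disjoint union of copies of $F$ is itself a finite substructure of $X$, so for each $n$ one gets almost surely a closed copy of $n$ disjoint copies of $F$, and intersecting over all $F$ and $n$ gives countably many measure-one events. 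You instead reopen the proof of Lemma~\ref{le:union-delta} and apply the second Borel--Cantelli lemma to the independent events $Y\cap H_i=F_i$; this is equally valid (the events have disjoint supports and constant positive probability, so the divergence hypothesis holds), and it has the mild advantage of yielding the infinitude of copies directly rather than through the closure of $\mathcal C$ under disjoint unions. Your remaining steps --- that a closed copy of a connected structure is a whole component, that the connected component types of $X^*$ form a countable set each occurring denumerably often, and the greedy injective assignment of unused components of $Y$ to components of $X^*$ --- are exactly the assembly the paper takes for granted, and your reduction to components correctly makes the disjointness and closedness of the assembled copy automatic.
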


\begin{proof}
The duplicate substructure property and Lemma~\ref{le:union-delta} together guarantee that the set of substructures with the desired property is the intersection of a countable number of sets of measure $1$. 
\end{proof}

\begin{theorem}\label{th:random} With probability $1$ a random substructure of $X$ is a model of $T$. In particular, almost all random substructures of $X$ are elementarily equivalent.
\end{theorem}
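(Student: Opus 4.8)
The plan is to reduce the first assertion to Lemma~\ref{le:submodel}, which already supplies the only probabilistic input we need, and then to obtain the ``in particular'' clause from the completeness of $T$. Recall that $X^*$, the disjoint union of denumerably many copies of each finite substructure of $X$, is a model of $T$, and that by Lemma~\ref{le:submodel} a random substructure $Y$ of $X$ almost surely contains a closed substructure $Z$ isomorphic to $X^*$.

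First I would argue that, whenever $Y$ contains such a closed copy $Z \cong X^*$, the structure $Y$ is itself a model of $T$; this is a deterministic implication, so combined with Lemma~\ref{le:submodel} it yields $Y \models T$ with probability $1$. There are two kinds of axioms to check. For $F \in \mathcal A - \mathcal C$, the axiom $\sigma_F$ says that $Y$ has no substructure isomorphic to $F$; this holds for every substructure of $X$, since any substructure of $Y$ is a substructure of $X$ and $F$ is isomorphic to none of these. For $F \in \mathcal A \cap \mathcal C$, the axiom $\sigma_F$ says that $Y$ has a closed substructure isomorphic to $F$. Since $X^* \models \sigma_F$, the copy $Z$ contains a substructure $F_0 \cong F$ that is a union of components of $Z$; because $Z$ is a union of components of $Y$, the components of $Z$ are components of $Y$, so $F_0$ is closed in $Y$ as well. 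Hence every axiom of $T$ holds in $Y$. As the event $\{Y \models T\}$ is measurable (a countable boolean combination of cone events) and contains the measure-$1$ event of Lemma~\ref{le:submodel}, it has measure $1$.

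This establishes the first assertion. For the second, I would invoke Lemma~\ref{le:complete}: $T$ is complete, so all of its models are elementarily equivalent. As almost every random substructure of $X$ is a model of $T$, almost all random substructures are elementarily equivalent to one another (and to $X^*$).

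Since Lemma~\ref{le:submodel} does all the measure-theoretic work, there is no real obstacle here; the one point deserving care is the family of ``no substructure isomorphic to $F$'' axioms, which might appear to demand a probabilistic argument but in fact hold for \emph{every} substructure of $X$. The only other thing to verify---that a union of components of a closed substructure $Z$ of $Y$ is again closed in $Y$---is an immediate consequence of the definition of closed as a union of components.
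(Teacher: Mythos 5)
Your proof is correct, but at the key deterministic step it takes a genuinely different route from the paper's. Both arguments reduce, via Lemma~\ref{le:submodel}, to a deterministic claim about a substructure $X_0\subseteq X$ containing a closed copy $Z$ of $X^*$. The paper proves that such an $X_0$ is elementarily equivalent to $X^*$ by re-running the \emph{proof} of Lemma~\ref{le:complete}: one verifies the disjoint ball extension condition for the pair $X_0$, $X^*$ and applies Theorem~\ref{th:equivalence}; that $X_0\models T$ then follows because $X^*\models T$. You instead verify the axioms of $T$ in $X_0$ directly: the negative axioms $\sigma_F$, $F\in\mathcal A - \mathcal C$, hold for every substructure of $X$ whatsoever, and the positive axioms are inherited from $Z$ because a closed substructure of the closed substructure $Z$ is closed in $X_0$ (your observation that the components of $Z$ are components of $X_0$ is the crux; it implicitly uses the fact that closedness of $Z$ makes the Gaifman graph $[Z]$ the induced subgraph of $[X_0]$ on $Z$, since any relation tuple mixing $Z$ with its complement would create a forbidden edge). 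You then obtain the ``in particular'' clause by citing the \emph{statement} of Lemma~\ref{le:complete} (completeness), where the paper, having shown every random substructure is a.s.\ equivalent to the fixed structure $X^*$, gets mutual equivalence by transitivity. Your decomposition is the more elementary one for the first assertion---no Ehrenfeucht game or ball-extension verification is needed, only the combinatorics of closed substructures---whereas the paper's route avoids ever mentioning the individual axioms and leaves the adaptation of Lemma~\ref{le:complete} to the reader. You also supply a measurability check (the event $\{Y\models T\}$ is a countable boolean combination of cones) that the paper silently omits; this is a minor but genuine improvement, since a superset of a measure-one set need not be measurable unless the measure is completed.
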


\begin{proof} By Lemma~\ref{le:submodel} it suffices to show that if a substructure $X_0$ of $X$ contains a union of connected components isomorphic to $X^*$, then $X_0$ is elementarily equivalent to $X^*$. The argument used in the proof of Lemma~\ref{le:complete} applies.
\end{proof}

\section{Random Subgraphs of Trees}\label{se:trees}
In this section we obtain more precise results for random subtrees of trees.

Let $\G_k$, $k\ge 1$, be the full $k$-ary tree, that is, the tree with one vertex, the root, of degree $k$ and all others of degree $k+1$. A descending path in $\G_k$ is one which starts at any vertex and continues away from the root.

As we noted earlier, Theorem~\ref{th:main} applies to $\G_k$. We maintain the following notation: $p$ is a number strictly between $0$ and $1$, $q=1-p$, and $\mu$ is the corresponding measure on subgraphs of $\G_k$.

Let $p_n$ be the probability that a random subgraph admits no descending path of length $n$ starting at a fixed vertex $v$. A moments thought shows that $p_0=q$, and $p_{n+1}=q + pp_n^k$. In particular $p_n$ is independent of the choice of $v$. The probability that a random subtree contains an infinite descending path starting at a particular vertex $v$ is $1 - \lim_{n \to \infty}p_n$.

\begin{lemma}\label{th:infinite path} The probability that a random subtree contains an infinite descending path starting at a particular vertex $v$ is $0$ if $p \le 1/k$ and strictly between $0$ and $1$ otherwise.
\end{lemma}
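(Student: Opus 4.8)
The plan is to analyze the limit $L = \lim_{n\to\infty} p_n$ directly through the recurrence, since the desired probability equals $1 - L$. I would introduce the auxiliary function $f(x) = q + p x^k$, so that $p_{n+1} = f(p_n)$ with $p_0 = q$. The map $f$ sends $[0,1]$ into itself and is non-decreasing there, and its fixed points in $[0,1]$ are exactly the roots of $g(x) = f(x) - x = p x^k - x + q$. At the outset I would record three facts: $g(0) = q > 0$, $g(1) = 0$, and (for $k \ge 2$) that $g$ is convex on $[0,1]$, since $g''(x) = k(k-1)p x^{k-2} \ge 0$. The case $k = 1$, where $\G_1$ is a ray and $g(x) = q(1-x)$ has its only root at $x=1$, is immediate and can be disposed of separately.

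Next I would show that $(p_n)$ is non-decreasing and bounded above by $1$, hence convergent. Monotonicity follows because $p_{n+1} - p_n = g(p_n)$, and boundedness because $f$ preserves $[0,1]$; the limit $L$ is therefore a fixed point of $f$ with $L \ge p_0 = q$. Since $g(q) = p q^k > 0$, the starting value $q$ lies strictly below the first positive root of $g$, which will let me pin down which fixed point the sequence actually reaches.

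The crux is to locate the roots of $g$ in $[0,1]$, and here the sign of $g'(1) = kp - 1$ is decisive. If $p \le 1/k$, then $g'(x) = kp x^{k-1} - 1 \le kp - 1 \le 0$ on $[0,1]$, so $g$ is non-increasing with $g(1) = 0$; thus $g > 0$ on $[0,1)$ and $x = 1$ is the only fixed point. The increasing sequence then forces $L = 1$, and the probability $1 - L$ equals $0$. If instead $p > 1/k$, then $g'(1) > 0$, so $g$ is negative just to the left of $1$; combined with $g(0) > 0$ and convexity (which permits at most two roots), this yields a unique root $L^* \in (0,1)$, with $g > 0$ on $[0, L^*)$ and $g < 0$ on $(L^*, 1)$. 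Because $q < L^*$ and $f$ is increasing, an induction gives $p_n < L^*$ for all $n$, so $L \le L^* < 1$; as the only fixed point in $[q, L^*]$ is $L^*$ itself, $L = L^*$ and the probability $1 - L^*$ lies strictly between $0$ and $1$.

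I expect the main obstacle to be the bookkeeping that separates the two regimes and guarantees the sequence converges to $L^*$ rather than overshooting to the fixed point at $1$; the convexity of $g$ is what makes this clean, since it limits $g$ to two roots and fixes the sign pattern on $[0,1]$. I would also check the boundary case $p = 1/k$ (for $k \ge 2$), where $x = 1$ becomes a double root of $g$, to confirm that it still yields $L = 1$ and probability $0$, consistent with the closed inequality $p \le 1/k$.
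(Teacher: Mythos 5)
Your proposal is correct and follows essentially the same route as the paper's proof: iterating $f(x)=q+px^k$ from $p_0=q$, using monotone convergence to the least fixed point of $f$ in $[0,1]$, and distinguishing the cases $p\le 1/k$ and $p>1/k$ via convexity. You merely make explicit what the paper leaves implicit (the root count of $g(x)=f(x)-x$ via the sign of $g'(1)=kp-1$ and convexity, plus the boundary case $p=1/k$), which is a presentational refinement rather than a different argument.
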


\begin{proof} Define $f(x)=q + px^k$. Observe that $f(0)= q=p_0$, $f(f(0))=p_1$, etc. Further $f$ maps the unit interval to itself and is strictly increasing on that interval. Thus $p_0, p_1, p_2, \ldots$ is an increasing bounded sequence which converges to a fixed point of $f$. When $k=1$, $f$ is linear with a single fixed point (on the unit interval) at $x=1$. Otherwise $f$ is concave up and has a single fixed point at $x=1$ if $p\le 1/k$ and two fixed points if $p > 1/k$. Let $x_0$ be the least fixed point of $f$ on the unit interval. $0 \le x_0$ implies that every point in the forward orbit of $0$ under $f$ is no greater than $x_0$. Thus $p_0, p_1, p_2, \ldots$ converges to $x_0$. As $0 < q \le x_0$, we are done.
\end{proof}

We observe that the statement that there is an infinite descending path starting at the root of a full $k$-ary tree can be formulated in monadic second-order logic, in fact in existential monadic second-order logic. Thus we have evidence that Theorem~\ref{th:random} does not extend to this more powerful logic.


\begin{thebibliography}{99}

\bibitem{BS1} I. Benjamini and O. Schramm, Percolation beyond $Z^ d$, many questions and a few answers. Electron.\ Comm.\ Probab., 1 (no. 8) , 71-82 (electronic), 1996.

\bibitem {BS2} \bysame, Recent progress on percolation beyond $Z^d$,
  http://research.microsoft.com/~schramm/pyondrep/.

\bibitem{BG} A. Blass and Y. Gurevich, Zero-One Laws: Thesauri and
Parametric Conditions.  Bull.\ Euro.\ Assoc.\ for Theor.\ Computer
Science, \# 91 (February 2007). 

\bibitem{C1} K. J. Compton, A logical approach to asymptotic combinatorics I. First order properties, Advances in Math., Volume 65, 1987, 65-96.

\bibitem{C} K. J. Compton, 0--1 laws in logic and combinatorics, in I. Rival (ed.) NATO Advanced Study Institute on Algorithms and Order, D. Reidel, 1989, 353--383.

\bibitem{EF} H-D Ebbinghaus and J. Flum, {\em Finite Model Theory}, Springer, 1995.

\bibitem{F} R. Fagin, Probabilities on finite models, Journal of Symbolic Logic, Volume 41, 1976, 50-58, 1976.

\bibitem{Ga} H. Gaifman, On local and nonlocal properties, Proceedings of the Herbrand symposium (Marseilles, 1981), 105--135, Stud. Logic Found. Math., Volume 107, North-Holland, Amsterdam, 1982.

\bibitem{GKLT} Y. Glebski, V. Kogan, M. I. Liogonkij and V.A. Talanov, The extent and degree of satisfiability of formulas of the restricted predicate calculus, Kibernetika, Volume 2, 1969, 17--27.

\bibitem{G} Y. Gurevich {\it Zero-One laws}, The logic in computer science column, Current Trends in Theoretical Computer Science", Eds. G. Rozenberg and A. Salomaa, World Scientific, Series in Computer Science, Volume 40, 1993.

\bibitem{JS} R. Jajcay and J. Sir\'{a}\v{n}, A construction of vertex-transitive non-Cayley graphs, Austalas.\ J. Combin., Volume 10, 1994, 105--114.

\bibitem{KPR} Ph. G. Kolaitis, H. J. Promel, and B. L. Rotschild, $K_{l+1}$--free graphs: asymptotic structure and a $0-1$ law, Trans.\ Amer.\  Math.\  Soc., Volume 303, 1987, 637-671.

\bibitem{W} P. Winkler, Random structures and zero-one laws, in N. W. Sauer et al. (eds.), {\em Finite and Infinite Combinatorics in Sets and Logic}, NATO Advanced Science Institutes Series, Kluver, 1993, 399--420.

\end{thebibliography}
\end{document}